\newtheorem{theorem}{Theorem}[section]
\newtheorem{corollary}[theorem]{Corollary}
\newtheorem{example}[theorem]{Example}
\newtheorem{proposition}[theorem]{Proposition}
\theoremstyle{definition}
\newtheorem{definition}[theorem]{Definition}
\theoremstyle{remark}
\numberwithin{equation}{section}
\begin{document}


\title{Global stability in some one-dimensional non-autonomous discrete periodic population models}

\author[1,2]{Rafael Lu\'{i}s\thanks{rafael.luis.madeira@gmail.com}}
\author[1]{Elias Rodrigues \thanks{elias@uma.pt}}
\affil[1]{University of Madeira, Funchal, Madeira, Portugal.}
\affil[2]{Center for Mathematical Analysis, Geometry, and Dynamical Systems,
University of Lisbon, Lisbon, Portugal.}

\maketitle

\begin{abstract}
 For some one-dimensional discrete-time autonomous population models, local stability implies global stability of the positive equilibrismo point. One of the known techniques is the enveloping method. In this paper we extend the enveloping method to one single periodic population models. We show that, under certain conditions,  ``individual enveloping" implies  ``periodic enveloping" in one-dimensional periodic population models.  
\end{abstract}

\textbf{Keys Words:} Periodic population models, Local stability, Global stability, Enveloping, Applications.

\textbf{AMS:} {Primary 37C75},  Secondary{ 39A23, 39A30, 39A60.}

\section{Introduction}

One-dimensional models are an appropriate mathematical tool to model the behavior of populations with  non-overlapping generations. This subject has been intensely investigated by different researchers.

An \textbf{autonomous population model} is a difference equation of the form $$x_n=f(x_n),~~  n \in \mathbb{Z}^{+}_0,$$ where the map $f$ is a continuous function from the nonnegative reals to the nonnegative reals and there is a unique positive number $x^*$, the equilibrium point, such that 

\begin{eqnarray*}
f(0)  &=&0, \\
f(x) &>&x\text{ for }0<x<x^*, \\
f(x) &=&x\text{ for }x=x^*,\\
f(x) &<&x\text{ for }x>x^*.
\end{eqnarray*}
Furthermore, the map $f$ is bounded on $[0,x^*]$. Notice that these conditions correspond to the usual assumptions in population dynamics.

After normalization, we can always assume that $x^*=1$. In \cite{ElaydiDC} we can find a complete study for the local properties of $x^*$. In general, it is much more complicated to investigate the global stability of $x^*$. The following result found in  \cite[Corollary 2.4]{SERS2004} and \cite[Theorem 2.1, pp 47]{SMR1993} gives  a condition on global stability.
\begin{theorem}\label{tSh}
Let $x^*$ be a fixed point of a continuous map $f$ on the compact interval $[a, b]$. Then $x^*$ is globally asymptotically stable relative  to the interval $(a, b)$ if and only if $f^2(x) > x$ for $x < x^*$ and $f^2(x) < x$ for $x > x^*$, for all $x\in (a, b)$.
\end{theorem}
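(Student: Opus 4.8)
The plan is to prove the two implications separately, reducing both to properties of the second iterate $g=f^2$. The opening remark is that the stated condition is equivalent to: for every $x\in(a,b)$ one has $g(x)>x\iff x<x^*$ and $g(x)<x\iff x>x^*$. In particular $x^*$ is the only fixed point of $g$ in $(a,b)$, hence $f$ has no fixed point other than $x^*$ and no $2$-cycle meeting $(a,b)$.

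For necessity I would argue by contraposition. If the condition fails, then (the two cases being symmetric) there is $x_0\in(a,x^*)$ with $g(x_0)\le x_0$. If $g(x_0)=x_0$, then $x_0$ is a periodic point of $f$ of period $\le 2$ different from $x^*$, so its orbit does not converge to $x^*$. If $g(x_0)<x_0$, then global asymptotic stability forces $g^{n}(x_0)\to x^*>x_0$, so there is a least $k\ge 2$ with $g^{k}(x_0)>x_0$; putting $w=g^{k-1}(x_0)$ we get $w<x_0$, $g(w)=g^{k}(x_0)>x_0>w$, while $g(x_0)<x_0$. The intermediate value theorem applied to $\phi(x)=g(x)-x$ on $[w,x_0]$ yields $c\in(w,x_0)\subset(a,x^*)$ with $g(c)=c$; then $c$ is a periodic point of $f$ of period $\le 2$ distinct from $x^*$, contradicting global attraction. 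Hence the $f^2$-condition is necessary.

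For sufficiency, the main input is a Coppel-type theorem: a continuous self-map of a compact interval has all orbits converging to fixed points if and only if it has no $2$-cycle. By the opening remark $f$ has no fixed point other than $x^*$ and no $2$-cycle meeting $(a,b)$; since a periodic orbit of period $>1$ forces, by Sharkovsky's theorem, a $2$-cycle, $f$ has no nontrivial periodic orbit meeting $(a,b)$, and hence every orbit meeting $(a,b)$ converges to a fixed point. The only available fixed points are $x^*$ and the endpoints $a,b$, and an orbit issuing from $(a,b)$ cannot tend to $a$: once $x_n<x^*$, the sign condition gives $x_{n+2}=f^2(x_n)>x_n$, so the even subsequence is eventually strictly increasing, which is incompatible with convergence to $a$; the case of $b$ is symmetric. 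Therefore every orbit issuing from $(a,b)$ converges to $x^*$, and Lyapunov stability of $x^*$ then comes for free for a globally attracting fixed point of a continuous interval map. (One may also run the monotone part of this more transparently through $g=f^2$: a $g$-orbit eventually on one side of $x^*$ is eventually monotone and bounded, hence convergent, necessarily to $x^*$, and then $x_{2n}=g^{n}(x_0)\to x^*$, $x_{2n+1}=g^{n}(f(x_0))\to x^*$.)

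The step I expect to be the real obstacle is justifying this dichotomy in the oscillating case. The hypothesis concerns $f^2$ alone and is blind to how the half-orbits $(x_{2n})$ and $(x_{2n+1})$ interlock; for an arbitrary continuous map $g$ with the stated sign behaviour the conclusion is simply false (take $g(x)=c-x$, every point of which is $2$-periodic), so one must genuinely exploit that $g=f^2$ is a true second iterate. Concretely this means feeding the absence of fixed points of $f^2$ in $(a,b)\setminus\{x^*\}$ into a global structure theorem for interval maps (Coppel, Sharkovsky) rather than into a soft monotonicity argument, and, along the way, excluding a $4$-cycle (or higher even cycle) of $f$ compatible with the $f^2$-condition, together with a possible $2$-cycle $\{a,b\}$ at the endpoints — which is exactly where the interaction between $f$ and $f^2$ has to be used in an essential way.
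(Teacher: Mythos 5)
A preliminary remark: the paper does not prove Theorem \ref{tSh} at all; it imports it from \cite{SERS2004} and \cite{SMR1993}, so your proposal can only be judged on its own merits. Your necessity argument is essentially correct (the only missing word is why $w=g^{k-1}(x_0)$ is \emph{strictly} less than $x_0$; by minimality of $k$ you only get $w\le x_0$, but equality would make $x_0$ periodic under $f$ and is dispatched by the same contradiction).

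The sufficiency direction, however, has a genuine gap, and it is precisely the one you flag in your last paragraph without closing. Coppel's theorem, in the form you need (and as the paper states it in Theorem \ref{coppel}), requires that $f$ have no point of prime period two anywhere in $[a,b]$; your hypothesis only forbids period-two points meeting the \emph{open} interval $(a,b)$, and a two-cycle $\{a,b\}$ is genuinely compatible with the hypothesis. Concretely, on $[0,1]$ take $f(x)=1-x-\varepsilon\, x(1-x)(1-2x)$ with $\varepsilon>0$ small: this is a decreasing homeomorphism with $f(0)=1$, $f(1)=0$, $f(1/2)=1/2$, whose graph lies below the antidiagonal on $(0,1/2)$ and above it on $(1/2,1)$; reflecting in the diagonal shows $f<f^{-1}$ on $(0,1/2)$ and $f>f^{-1}$ on $(1/2,1)$, i.e. $f^2(x)>x$ on $(0,1/2)$ and $f^2(x)<x$ on $(1/2,1)$ --- yet $\{0,1\}$ is a two-cycle. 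So the inference ``no two-cycle meeting $(a,b)$, hence by Coppel every orbit converges to a fixed point'' is not licensed, and the same hole infects the Sharkovsky step: the two-cycle forced by a putative four-cycle in $(a,b)$ might be $\{a,b\}$, so higher even periods are not yet excluded either. Your monotone fallback through $g=f^2$ only treats orbits eventually confined to one side of $x^*$; the oscillating case, which you yourself identify as the crux, is exactly what remains unproved. Closing it requires either producing an $f$-invariant compact subinterval of $(a,b)$ containing the orbit (nontrivial, since the interval spanned by an orbit need not be invariant) or running the interlocking intermediate-value argument on $f$ and $f^2$ directly, as in the cited sources. A minor further point: ``Lyapunov stability comes for free for a globally attracting fixed point of a continuous interval map'' is true but is itself a lemma that deserves a reference or a short argument.
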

In other words, a continuous population model is globally stable if and only if it has no cycles of minimal period 2. This result was noticed much earlier by Coppel in 1955 \cite{Coppel1955}: 

\begin{theorem}\label{coppel}
\label{thm_no_periodic_points} Let $I=[a, b]\subseteq\mathbb{R}$ and $f:I
\rightarrow I$ be a continuous map. If $f$ has no points of prime period
two, then every orbit under the map $f$ converges to a fixed point.
\end{theorem}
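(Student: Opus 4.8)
The plan is to deduce Theorem~\ref{coppel} from a monotonicity statement about the second iterate $f^{2}$. Two facts come for free. First, since $f$ is continuous and $f(I)\subseteq I=[a,b]$, the function $t\mapsto f(t)-t$ is nonnegative at $a$ and nonpositive at $b$, so $f$ has at least one fixed point. Second, the hypothesis that $f$ has no point of prime period two is exactly the statement that every fixed point of $f^{2}$ is already a fixed point of $f$; that is, $\operatorname{Fix}(f^{2})=\operatorname{Fix}(f)$.

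Next I would reduce the theorem to the claim that \emph{for every $x\in I$ the sequence $\bigl(f^{2n}(x)\bigr)_{n\ge 0}$ converges}. Granting this, let $p=\lim_{n}f^{2n}(x)$; by continuity $f^{2}(p)=p$, hence $p\in\operatorname{Fix}(f^{2})=\operatorname{Fix}(f)$. Applying the claim to the point $f(x)$ shows that $f^{2n+1}(x)=f^{2n}\!\bigl(f(x)\bigr)$ converges to some $q\in\operatorname{Fix}(f)$. Since $f\bigl(f^{2n}(x)\bigr)=f^{2n+1}(x)$ and $f$ is continuous, the left side tends to $f(p)=p$ and the right side to $q$, so $p=q$ and the whole orbit $\bigl(f^{n}(x)\bigr)_{n}$ converges to the fixed point $p$. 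This is precisely the mechanism behind Theorem~\ref{tSh}, Coppel's theorem being the version that allows $f$ to have many fixed points. In practice I expect it to be cleanest to prove the stronger statement that $\bigl(f^{2n}(x)\bigr)_{n}$ is \emph{monotone}, convergence then being automatic from boundedness.

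The monotonicity of $\bigl(f^{2n}(x)\bigr)_{n}$ is the crux, and the step I expect to be hardest. Argue by contradiction. If two consecutive even iterates ever coincide, that point lies in $\operatorname{Fix}(f^{2})=\operatorname{Fix}(f)$ and the even orbit is eventually constant, hence monotone; so one may assume every step is nonzero, and then a non-monotone even orbit has a strict ``turn'': there is an index $k$ with $u:=f^{2k}(x)$ satisfying either $u<f^{2}(u)>f^{4}(u)$ (a peak) or $u>f^{2}(u)<f^{4}(u)$ (a valley), and the two cases are symmetric, so treat the peak. Applying the intermediate value theorem to $t\mapsto f^{2}(t)-t$ on the interval between $u$ and $f^{2}(u)$, where it is positive at $u$ and negative at $f^{2}(u)$, produces an interior point $q$ with $f^{2}(q)=q$, hence $f(q)=q$: a fixed point of $f$ strictly between $u$ and $f^{2}(u)$. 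The plan is then to exploit the fact that the orbit arc $u,f(u),f^{2}(u),f^{3}(u),f^{4}(u)$ crosses back and forth across $q$: one picks closed intervals $J_{0},J_{1}$ on opposite sides of $q$ with $f(J_{0})\supseteq J_{1}$ and $f(J_{1})\supseteq J_{0}$, and then the standard covering (itinerary) lemma yields $s\in J_{0}$ with $f(s)\in J_{1}$ and $f^{2}(s)=s$; since $s$ and $f(s)$ lie on opposite sides of $q$ we get $f(s)\ne s$, so $s$ has prime period two, contradicting the hypothesis. The genuine work is this last step: which intervals $J_{0},J_{1}$ to take depends on where $f(u),f^{3}(u),f^{4}(u)$ fall relative to $q$, so several configurations must be handled, and one must make sure the periodic point produced really has period two rather than collapsing to $q$ (which can be arranged, if necessary, by closing a loop of length four instead and invoking the relation $4\succ 2$ in Sharkovsky's ordering).

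As a safeguard I would keep a softer route to this step in reserve. By Sharkovsky's theorem, a periodic orbit of any period other than $1$ forces one of period two, so the hypothesis forces every periodic point of $f$ to be fixed; combining this with the classical structure theory of $\omega$-limit sets of continuous interval maps --- an infinite $\omega$-limit set forces a periodic orbit of period two --- gives that each $\omega$-limit set is a single fixed point, which is again the conclusion of Theorem~\ref{coppel}. I would present the elementary covering argument as the main line and fall back on this only if the case analysis proves unwieldy.
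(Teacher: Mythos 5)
First, a remark on the ground truth: the paper offers no proof of Theorem~\ref{coppel} at all --- it is quoted as a classical result with a citation to Coppel \cite{Coppel1955} --- so your argument has to stand on its own. Its outer layers do: the existence of a fixed point, the equivalence of the hypothesis with $\operatorname{Fix}(f^{2})=\operatorname{Fix}(f)$, and the reduction of the theorem to the convergence of the even subsequence $\bigl(f^{2n}(x)\bigr)_{n}$ (with the even/odd splicing argument) are all correct.

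The crux step, however, rests on a false claim: under the hypotheses of the theorem the sequence $\bigl(f^{2n}(x)\bigr)_{n}$ need \emph{not} be monotone. Take $f(x)=xe^{1.9(1-x)}$ on $I=[0,2]$ (then $f(I)\subseteq[0,1.3]\subseteq I$). This map has no point of prime period two, since $x^{*}=1$ is globally asymptotically stable on $(0,2]$ --- it is enveloped by $2-x$, exactly the situation of Subsection~\ref{grm} of the paper. Yet for $x=0.1$ one computes $f^{2}(0.1)\approx 1.29$ and $f^{4}(0.1)\approx 1.21$, so the even orbit $0.1,\ 1.29,\ 1.21,\dots$ exhibits precisely the ``peak'' $u<f^{2}(u)>f^{4}(u)$ that you propose to convert into a contradiction. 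No such conversion is possible, and one can see concretely where your covering plan dies: in this example the only solutions of $f^{2}(s)=s$ are $s=0$ and $s=1$, so any loop $f(J_{0})\supseteq J_{1}$, $f(J_{1})\supseteq J_{0}$ with $J_{0},J_{1}$ on opposite sides of $q=1$ must have $1\in J_{0}\cap J_{1}$, and the itinerary lemma returns only the fixed point $s=q$ --- the collapse you flag as something to be ``arranged'' away is unavoidable, and the period-four variant collapses for the same reason (there is no prime period-four point either). What is true, and what a correct proof must establish, is that the even orbit is \emph{eventually} monotone, or more directly that an orbit with $\liminf<\limsup$ forces a prime period-two point; that requires a genuinely different argument (Coppel's analysis of the oscillation set, or the lemma that absence of prime period two forbids a ``crossing pair'' $u<v$ with $f(u)\ge v$ and $f(v)\le u$), not a local argument at a single turn. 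Your fallback route via Sharkovsky's theorem and the structure theory of $\omega$-limit sets of interval maps is logically sound, but it deduces an elementary 1955 result from two far deeper theorems; as the main line is irreparable, the proposal as written does not prove the statement.
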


Unfortunately, this global stability condition may be difficult to test. Moreover, it seems that there is no obvious connection between the local stability conditions  and the global stability conditions.

In a series of papers \cite{Cull2003, Cull2005, Cull2006,CWW2008} P. Call and his collaborators embarked in the theory that ``enveloping" implies global stability. A function $h(x)$ envelops a function $f(x)$ if and only if
\begin{enumerate}
\item [(i)] $h(x)>f(x)$ for all $x\in (0,x^*=1)$;
\item [(ii)] $h(x)<f(x)$ for $x>x^*=1$ such that $h(x)>0$ and $f(x)>0$.
\end{enumerate}
Combining this definition with Theorem \ref{tSh} and Theorem \ref{coppel}, one has the following result.
\begin{theorem}\label{GlobalSt}(\cite{CWW2008})
If $f(x)$ is enveloped by $g(x)$, and $g(x)$ is globally stable, then $f(x)$ is
globally stable.
\end{theorem}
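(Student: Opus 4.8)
The plan is to reduce the global stability of $f$ to the no-period-two-points criterion of Theorem~\ref{tSh} (equivalently Theorem~\ref{coppel}), using the enveloping hypothesis to transfer that criterion from $g$ to $f$. First I would observe that, by normalization, $x^* = 1$ for both $f$ and $g$, and that since $g$ is globally stable, Theorem~\ref{tSh} guarantees $g^2(x) > x$ for $x < 1$ and $g^2(x) < x$ for $x > 1$ on the relevant interval. The goal is to establish the same two inequalities for $f^2$ and then invoke Theorem~\ref{tSh} in the other direction to conclude that $f$ is globally stable.

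The key step is a monotonicity-style comparison of the second iterates. Fix $x \in (0,1)$. From enveloping property (i), $f(x) < g(x)$, and from the defining properties of a population model $f(x) > x$, so $f(x) \in (x, g(x)] \subseteq (0,1]$ — I would need to handle the boundary case $f(x)=1$ separately, where $f^2(x)=f(1)=1>x$ trivially. Assuming $f(x) \in (0,1)$, apply the population-model inequality again to get $f^2(x) > f(x) > x$, which already gives one of the two needed inequalities; the enveloping of $g$ is not even required here. For $x > 1$: property (ii) gives $f(x) < g(x)$ when both are positive, and $f(x) < x$; if $f(x) \le 0$ one argues using boundedness/positivity conventions of the model, and if $f(x) \in (0,1]$ then $f^2(x) \ge f(x)$ could exceed... so here the comparison with $g$ is genuinely needed: one shows $f^2(x) < x$ by tracking where $f(x)$ lands relative to $g(x)$ and using that $g^2(x) < x$ together with the ordering of $f$ and $g$ on each side of $1$. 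The cleanest route is: if $f(x) > 1$ then $f^2(x) < f(x) < x$; if $f(x) = 1$ then $f^2(x) = 1 < x$; if $f(x) \in (0,1)$ then $f^2(x) > f(x)$ and one must separately verify $f^2(x) < x$, which is where one compares $f^2(x)$ against the corresponding iterate of $g$.

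Actually, the slicker argument — and the one I would write up — avoids case analysis on where $f(x)$ lands by comparing $f$ and $g$ directly as maps and using that enveloping is preserved under iteration in the appropriate sense: one shows that if $g$ has no $2$-cycle then the region $\{(x,y): \text{$y$ between $x$ and $f(x)$}\}$ is "trapped" correctly by the corresponding region for $g$. Concretely, I would prove that for all $x$ in the domain, $f(x)$ lies between $x$ and the fixed point more tightly than $g(x)$ does (on $(0,1)$: $x < f(x) < g(x) \le 1$; on $(1,\infty)$: $f(x) < g(x)$ on the overlap and $f(x) < x$), and then show this forces $f^2$ to satisfy the strict inequalities of Theorem~\ref{tSh}. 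The main obstacle is the bookkeeping around the positivity/boundedness conventions — what "$h(x) > 0$ and $f(x) > 0$" means in property (ii), and making sure the orbit of $f$ cannot escape the interval where the comparison is valid; one needs the boundedness of $f$ on $[0,1]$ and the structural assumptions ($f(0)=0$, $f(x)<x$ for $x>1$) to keep everything in a compact invariant interval on which Theorem~\ref{tSh} applies. Once the invariant interval is pinned down, the inequalities $f^2(x) > x$ for $x<1$ and $f^2(x) < x$ for $x>1$ follow from the iteration argument, and Theorem~\ref{tSh} closes the proof.
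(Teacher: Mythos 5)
Your overall plan --- transfer the no--period--two criterion of Theorem~\ref{tSh} from $g$ to $f$ via the enveloping inequalities --- is the right one (the paper itself gives no proof of this statement; it quotes it from the literature and remarks only that it follows by combining the definition of enveloping with Theorems~\ref{tSh} and~\ref{coppel}). However, the write-up has genuine gaps. First, you have the enveloping inequality backwards on $(1,\infty)$: condition (ii) of the definition says $g(x)<f(x)$ for $x>1$ (where both are positive), i.e.\ the envelope lies \emph{below} $f$ past the fixed point, whereas you repeatedly assert $f(x)<g(x)$ there. The sign change of $g-f$ across $x^*=1$ is the entire mechanism of the method, so this is not cosmetic. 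Second, your claim that $x\in(0,1)$ forces $f(x)\in(x,g(x)]\subseteq(0,1]$ is false: nothing bounds $g$ (or $f$) by $1$ on $(0,1)$ --- e.g.\ the Ricker maps of the applications section send points of $(0,1)$ above $1$. As a result, the case $x<1$ with $f(x)>1$, which you dismiss with ``the enveloping of $g$ is not even required here,'' is precisely the nontrivial case, and it is never treated. Third, in the cases where the orbit crosses $x^*$ --- the only cases where the hypothesis on $g$ is actually used --- you defer the argument (``one compares $f^2(x)$ against the corresponding iterate of $g$,'' ``one shows the region is trapped correctly'') without carrying it out. Note that the naive chain $f(f(x))>g(f(x))>g(g(x))>x$ requires $g$ to be monotone decreasing with $g\circ g=\mathrm{id}$; those are the hypotheses of Theorem~\ref{thmonotone}, not of the present statement, where $g$ is only assumed globally stable.

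A way to close the gap within your framework is to argue by contradiction. If $f$ is not globally stable, Theorem~\ref{tSh} (or~\ref{coppel}) produces a genuine $2$-cycle $\{x_1,x_2\}$, which must straddle the fixed point: $0<x_1<1<x_2$, $f(x_1)=x_2$, $f(x_2)=x_1$. Enveloping then gives $g(x_1)>f(x_1)=x_2$ and $g(x_2)<f(x_2)=x_1$ (the second inequality holding trivially if $g(x_2)\le 0$). Since $g$ is continuous with $g(1)=1$ and $g(x_1)>x_2>1$, the intermediate value theorem yields $u_0\in[x_1,1)$ with $g(u_0)=x_2$, whence $g^2(u_0)=g(x_2)<x_1\le u_0$ with $0<u_0<1$, contradicting the characterization $g^2(x)>x$ for $x<1$ that global stability of $g$ provides via Theorem~\ref{tSh}.
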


Hence, the enveloping function plays a central rule in this theory. In \cite{Cull2006} P. Cull presented the following result concerning the enveloping function.
\begin{theorem}\label{thmonotone}
Let $h(x)$ be a monotone decreasing function which is positive on $(0,x_{h}>1)$ and so that $h(h(x))=x$. Assume that $f(x)$ is a continuous function such that
\begin{eqnarray*}
h(x)  &>&f(x)\text{ on }(0,1), \\
h(x) &<&f(x)\text{ on }(1,x_h), \\
f(x) &>&x\text{ on }(0,1),\\
f(x) &<&x\text{ on }(1,\infty)\\
f(x)&>&0\text{ whenever }x>1.
\end{eqnarray*} 
Then for all $x>0$, $\lim_{n\rightarrow\infty}f^n(x)=1$.
\end{theorem}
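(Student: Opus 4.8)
The plan is to invoke Theorem~\ref{GlobalSt} (the enveloping criterion) with $g=h$, so the whole task reduces to showing that a monotone decreasing involution $h$ which is positive on $(0,x_h)$ with $x_h>1$ is itself globally stable on the relevant interval. Once that is established, the hypotheses $h(x)>f(x)$ on $(0,1)$ and $h(x)<f(x)$ on $(1,x_h)$ say precisely that $h$ envelops $f$ in the sense of conditions (i)--(ii), and the remaining hypotheses ($f(x)>x$ on $(0,1)$, $f(x)<x$ on $(1,\infty)$, $f>0$ when $x>1$) are exactly the standing assumptions that make $f$ a population model; so global stability of $f$, i.e. $\lim_{n\to\infty}f^n(x)=1$ for all $x>0$, follows immediately.

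So the real content is the claim: a decreasing involution $h$ with $h(h(x))=x$, positive on $(0,x_h)$, $x_h>1$, has $x^*=1$ as its globally asymptotically stable fixed point. First I would note that since $h$ is continuous and strictly decreasing with $h(1)=1$ (this follows from $h$ being an involution together with the population-model normalization; strictly speaking one checks $1$ is the unique fixed point of a decreasing map), we have $h^2=\mathrm{id}$ on its domain. I would then apply Theorem~\ref{tSh} (Sharkovsky/Coppel-type criterion) directly: global asymptotic stability relative to $(a,b)$ is equivalent to $f^2(x)>x$ for $x<x^*$ and $f^2(x)<x$ for $x>x^*$. But $h^2(x)=x$ identically, so this strict inequality fails --- meaning $h$ on its full domain is \emph{not} asymptotically stable; every point is periodic of period $2$ except the fixed point. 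This is the main obstacle, and it shows that the theorem cannot be proved by a naive direct application: one must work with $f$, not with $h$, and use the \emph{strictness} in the comparison between $f$ and $h$.

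The correct route, then, is to argue directly about $f^2$ using the envelope inequalities, rather than quoting global stability of $h$. The key steps I would carry out: (1) Show $f$ maps a suitable interval $[0,M]$ (with $M\ge f(\text{peak})$) into itself and that orbits are bounded, using $f(x)<x$ for $x>1$ and continuity/boundedness near $0$; reduce to studying $\lim f^n(x)$ on a compact invariant interval. (2) By Theorem~\ref{coppel} it suffices to show $f$ has no point of prime period $2$; equivalently, by Theorem~\ref{tSh}, show $f^2(x)>x$ for $0<x<1$ and $f^2(x)<x$ for $x>1$. (3) Fix $x\in(0,1)$. Then $f(x)>x$ and also $f(x)<h(x)$. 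Now split cases on whether $f(x)\le 1$ or $f(x)>1$. If $f(x)\in(x,1]$, then either $f(x)=1$ (done, orbit hits fixed point) or $f(x)<1$ and we can try to iterate; the delicate case is $f(x)>1$, where we use $f(x)<h(x)$, apply the decreasing involution $h$ to get $h(f(x))>h(h(x))=x$, and combine with the envelope inequality on $(1,x_h)$, namely $f(f(x))>h(f(x))$ when $f(x)\in(1,x_h)$, to conclude $f^2(x)>x$. One must also handle $f(x)\ge x_h$: there $f(x)<x$ forces the orbit downward, and a monotonicity/trapping argument brings it back into $(0,x_h)$; I would show this case either cannot persist or still yields $f^2(x)>x$ by a direct estimate. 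The symmetric argument handles $x>1$. (4) Assemble: no prime-period-2 points, hence by Coppel every orbit converges to a fixed point, and $1$ is the only fixed point, so $f^n(x)\to 1$ for all $x>0$. The genuinely hard part is step (3), in particular controlling the iterate when $f(x)$ overshoots past $x_h$ (where $h$ is no longer positive and the envelope condition gives no information), and making sure the strict inequalities propagate through the composition $h\circ f$; this is where the assumption $x_h>1$ and the boundedness of $f$ are essential.
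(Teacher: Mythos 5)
You should first note that the paper does not prove this theorem at all: it is imported verbatim from \cite{Cull2006}, so there is no in-paper argument to compare against. Judged on its own, your proposal lands on the correct strategy, and (after the self-corrected false start of trying to apply Theorem~\ref{GlobalSt} with $g=h$) it is essentially Cull's original argument: trap orbits in a compact invariant interval, show $f$ has no points of prime period two via the chain $f(f(x))>h(f(x))>h(h(x))=x$ for $x\in(0,1)$ with $f(x)>1$ (and its mirror image for $x>1$), and conclude with Theorem~\ref{coppel} / Theorem~\ref{tSh}. Your observation that $h$ itself is \emph{not} globally stable --- every non-fixed point of a decreasing involution is $2$-periodic --- and that it is the strictness of the enveloping inequalities that breaks the $2$-cycles of $f$, is exactly the right insight.

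The one genuine gap is the case you defer with ``I would show this case either cannot persist or still yields $f^2(x)>x$ by a direct estimate,'' namely the overshoot $f(x)\ge x_h$. It must be closed, but it closes in one line: a continuous decreasing involution that is positive exactly on $(0,x_h)$ is a bijection of $(0,x_h)$ onto itself, so $\lim_{x\to 0^+}h(x)=x_h$ and hence $h(x)<x_h$ for all $x\in(0,1)$; the hypothesis $f(x)<h(x)$ on $(0,1)$ then forces $f(x)<x_h$, so the overshoot never occurs and $h(f(x))$ is always defined and positive where you need it. On the other side, for $x\ge x_h$ the enveloping inequality is indeed unavailable, but it is not needed: if $f(x)\in(0,1)$ then $f^2(x)<h(f(x))<x_h\le x$, while if $f(x)\in[1,x)$ then $f^2(x)\le f(x)<x$; either way $f^2(x)<x$. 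Two smaller points: in the sub-case $x\in(0,1)$ with $f(x)\in(x,1)$ you do not need to ``iterate'' --- $f^2(x)=f(f(x))>f(x)>x$ follows at once from $f(y)>y$ on $(0,1)$; and you should record that $h(1)=1$ (forced by continuity at $1$ from the two enveloping inequalities together with $f(1)=1$) before using $h(h(x))=x$ around the fixed point. With these items filled in, your argument is complete.
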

The preceding theorem shows the importance of the enveloping function in global stability. So, the challenge will be to find the apropriar enveloping. Surprisingly  or not, the following M\"{o}bius transformation may help in finding the appropriate enveloping:
\begin{theorem}\cite{Cull2006}
If $f(x)$ is enveloped by a linear fractional function of the form $h(x)=\frac{1-\alpha x}{\alpha-(2\alpha-1)x}$, $\alpha\in[0,1)$, then $f(x)$ is globally stable.
\label{CORL_fractional}
\end{theorem}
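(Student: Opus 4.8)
The strategy is to verify that the M\"obius map $h(x)=\frac{1-\alpha x}{\alpha-(2\alpha-1)x}$ satisfies exactly the hypotheses imposed on the envelope in Theorem~\ref{thmonotone}, and then to invoke that theorem. Concretely, I will show that $h$ is a strictly decreasing involution fixing $1$, determine the endpoint $x_h>1$ of the interval on which $h$ is positive, and finally read off the conditions of Theorem~\ref{thmonotone} from the enveloping conditions (i)--(ii) together with the defining inequalities of a population model.

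First I would establish the basic algebra of $h$. Evaluating at $x=1$ gives $h(1)=\frac{1-\alpha}{\alpha-(2\alpha-1)}=\frac{1-\alpha}{1-\alpha}=1$, so $1$ is a fixed point. Representing $h$ by its coefficient matrix $M=\begin{pmatrix}-\alpha&1\\-(2\alpha-1)&\alpha\end{pmatrix}$, one computes $M^{2}=(1-\alpha)^{2}I$, which is a nonzero scalar matrix because $\alpha\neq1$; hence $h\circ h=\mathrm{id}$, i.e.\ $h(h(x))=x$. Differentiating, $h'(x)=\dfrac{-(1-\alpha)^{2}}{(\alpha-(2\alpha-1)x)^{2}}<0$ at every point where $h$ is defined, so $h$ is strictly decreasing on each interval avoiding its pole.

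Next I would pin down $x_h$, the right endpoint of the largest interval $(0,x_h)$ on which $h>0$. The numerator $1-\alpha x$ vanishes at $x=1/\alpha$ (to be read as $+\infty$ when $\alpha=0$, where $h(x)=1/x$), and $1/\alpha>1$. It then remains to check that the denominator $\alpha-(2\alpha-1)x$ does not change sign before $x=1/\alpha$: when $\alpha\le\tfrac12$ it equals $\alpha+(1-2\alpha)x\ge\alpha>0$ for all $x\ge0$, and when $\alpha>\tfrac12$ its unique zero is at $\tfrac{\alpha}{2\alpha-1}$, which exceeds $1/\alpha$ since $\tfrac{\alpha}{2\alpha-1}-\tfrac1\alpha=\tfrac{(\alpha-1)^{2}}{\alpha(2\alpha-1)}>0$. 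Thus $h>0$ exactly on $(0,1/\alpha)$, so I may take $x_h=1/\alpha>1$; in particular $h>0$ on $(1,x_h)$ and $h$ is strictly decreasing on $[0,x_h)$.

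It now suffices to assemble the hypotheses of Theorem~\ref{thmonotone}. Enveloping condition (i) gives $h(x)>f(x)$ on $(0,1)$; on $(1,x_h)$ both $h(x)>0$ (just shown) and $f(x)>0$ (population maps are positive on the positive axis), so enveloping condition (ii) yields $h(x)<f(x)$ there. The population-model axioms with $x^{*}=1$ supply $f(x)>x$ on $(0,1)$, $f(x)<x$ on $(1,\infty)$, and $f(x)>0$ for $x>1$. All hypotheses of Theorem~\ref{thmonotone} are then satisfied with this $h$, whence $\lim_{n\to\infty}f^{n}(x)=1$ for every $x>0$; equivalently, $f$ is globally stable. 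The one step demanding real attention is the determination of $x_h$ together with the check that $h$ is simultaneously positive and strictly decreasing on $[0,x_h)$, which forces the case split $\alpha\le\tfrac12$ versus $\alpha>\tfrac12$ (and the degenerate value $\alpha=0$); everything else reduces to direct substitution.
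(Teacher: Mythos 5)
The paper itself gives no proof of this statement---it is quoted verbatim from \cite{Cull2006}---so there is nothing internal to compare against; your derivation is the standard one and is essentially how Cull obtains it, namely as a corollary of Theorem~\ref{thmonotone}. Your verification that $h$ is a decreasing involution fixing $1$ and positive precisely on $\left(0,\tfrac{1}{\alpha}\right)$ (with the pole, when $\alpha>\tfrac12$, lying beyond $\tfrac{1}{\alpha}$) is correct, and the remaining hypotheses of Theorem~\ref{thmonotone} do follow from the enveloping conditions and the population-model axioms, so the argument is sound.
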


Since there are many options for enveloping in the preceding Theorem, it is necessary to adjust each particular model with the enveloping in the set of parameters. Hence, this task is not easy.

Eduardo Liz \cite{Liz2007} tried to simplify this process. He studied enveloping function for  models of the form $x_{n+1}=x_n+f(x_n)$ and uses Schwarzian derivative of $f$. He was able to characterize when Local Stability implies Global Stability in certain one-dimensional population models. His results are based in the following proposition:
\begin{proposition}\cite{Liz2007}
Let $f$ to be a population model defined as before and suppose that $f$ is a $C^3$ map that has at most one critical point $x_c$. If $|f^\prime(x^*)|\leq 1$ and $Sf(x)<0$ for all $x\neq x_c$, then $x^*$ is a globally stable fixed point of $f$, where $Sf$ is the Schwarzian derivative of $f$ given by
\[
Sf(x)=\dfrac{f^{\prime\prime\prime}(x)}{f^{\prime}(x)}-\dfrac{3}{2}\left(\dfrac{f^{\prime\prime}(x)}{f^{\prime}(x)}\right)^2.
\]
\end{proposition}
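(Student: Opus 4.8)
The plan is to reduce global stability to the non-existence of $2$-cycles — via Theorem~\ref{coppel}, or equivalently Theorem~\ref{tSh} — and then to extract a contradiction from a hypothetical $2$-cycle using the negative Schwarzian of $f^{2}$. The population structure gives a compact interval $[0,M]$ with $f([0,M])\subseteq[0,M]$ (take $M$ above a bound for $f$ on $[0,1]$; since $f(x)<x$ for $x>1$, every larger $M$ also works), and by Theorem~\ref{coppel} every forward orbit converges to a fixed point; as $f(x)>x$ near $0$ the point $0$ is not attracting, so $x^{*}=1$ is globally stable if and only if $f$ has no point of prime period two. So suppose a $2$-cycle $\{p,q\}$, $p<q$, exists (if $f$ has no critical point it is increasing and has no $2$-cycle, so this forces $f$ to have its critical point $x_{c}$). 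The population inequalities force $p<1<q$: if $q\le 1$ then $f(q)\ge q$ while $f(q)=p<q$, and if $p\ge 1$ then $f(p)\le p$ while $f(p)=q>p$; we may take $\{p,q\}$ innermost, so $(p,1)$ contains no periodic point of period $\le 2$. Put $g=f^{2}$. From the chain rule $Sg(x)=Sf(f(x))\,f'(x)^{2}+Sf(x)$ we get $Sg<0$ off the finite set $E=\{x_{c}\}\cup f^{-1}(x_{c})$ of critical points of $g$, and I will repeatedly use the \emph{minimum principle}: on any interval disjoint from $E$, $|g'|$ has no positive local minimum (at a zero of $g''$ the sign of $Sg$ forces $g'''$ and $g'$ to have opposite signs, i.e.\ a local maximum of $|g'|$), so over a closed such interval the minimum of $|g'|$ is attained at an endpoint. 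One also checks that $1$ is attracting for $g$: clear when $|f'(1)|<1$, and when $|f'(1)|=1$ it follows from the population inequalities (which force $f''(1)=0$ if $f'(1)=1$) together with $Sf(1)<0$, making the leading term of $g(x)-x$ at $x=1$ have the attracting sign.

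The argument is then immediate in the main case $x_{c}\notin[p,q]$. There $f$ is monotone on $[p,q]$, and since $f(p)=q>p=f(q)$ it is decreasing, so $f([p,q])=[p,q]$, $g$ is increasing on $[p,q]$ with $g([p,q])=[p,q]$, and $(p,q)$ is disjoint from $E$. By the minimum principle $g'$ has at most one interior extremum on $[p,q]$ (a maximum), hence $g'-1$ has at most two zeros and, by Rolle, $g-\mathrm{id}$ has at most three zeros on $[p,q]$; since $p,1,q$ are three zeros, these are the only fixed points of $g$ there, the immediate basin of $1$ in $[p,q]$ is $(p,q)$, $p$ is repelling from the right, and $\mu:=g'(p)=g'(q)=f'(p)f'(q)\ge 1$. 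The minimum principle also gives $g'\ge\mu\ge 1$ on $[p,q]$, so $g'(1)\ge 1$; combined with $g'(1)=f'(1)^{2}\le 1$ this forces $g'(1)=\mu=1$, so $g'$ attains its minimum $1>0$ at the interior point $1$, contradicting the minimum principle (the degenerate possibility $g'\equiv 1$ on a subinterval forces $Sg=0$ there, contradicting $Sg<0$). The positions of $x_{c}$ inside $[p,q]$ are dispatched directly: $x_{c}=1$ is impossible because then $f\le f(1)=1$, ruling out $q>1$; $x_{c}=q$ would make $f$ increasing on $[p,q]$, forcing $f(p)<f(q)$, i.e.\ $q<p$; $x_{c}=p$ again gives $f([p,q])=[p,q]$ and $g'(p)=0$, so $p$ is attracting for the increasing map $g$, forcing $g<\mathrm{id}$ on $(p,1)$ and hence $1$ repelling from the left, contrary to $1$ being attracting for $g$; and $x_{c}\in(1,q)$ is impossible since $q=f(p)\le f(x_{c})=:m$ while $x_{c}>1$ gives $m=f(x_{c})<x_{c}<q$.

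The remaining case $x_{c}\in(p,1)$ is where the work lies. Here $m:=f(x_{c})>q>1$, the map $g$ is not monotone on $[p,q]$ (a two-branch computation gives: $g$ decreasing on $[p,x_{c}]$, increasing on $[x_{c},x_{1}]$, decreasing on $[x_{1},q]$, where $x_{1}\in(1,q)$ is the preimage of $x_{c}$ on the decreasing branch of $f$), and $[p,q]$ is no longer $g$-invariant. One has to pass to the forward-invariant interval $\big[f(f(x_{c})),f(x_{c})\big]$, exclude $g$-fixed points other than $p,1,q$ on it, and locate $x_{1}$ relative to $1$, before running the minimum-principle argument on the monotone branch of $g$ through $1$. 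This is precisely where Singer's theorem is genuinely needed: its content that the immediate basin of any non-repelling periodic orbit of $g$ contains a point of $E$ or a boundary point of $[0,M]$, together with the observation that every point of $E$ is a preimage of $x_{c}$ and so shares a finite shift of the forward orbit of $x_{c}$ — whence a single critical point cannot be absorbed by both $\{1\}$ and $\{p,q\}$ — and with the endpoints excluded ($0$ is fixed, $M$ arbitrarily large), forces $\{p,q\}$ to be repelling; a local analysis of $g$ near the repelling cycle, again using that $|g'|$ has no positive interior minimum on an $E$-free subinterval, then closes the argument. Carrying this through, including the neutral subcase $|f'(1)|=1$, is the delicate step; the rest is bookkeeping with the population inequalities and the Schwarzian chain rule. (Alternatively one can follow Cull's route: show that $Sf<0$ and $|f'(1)|\le 1$ force $f$ to be enveloped by a linear-fractional involution $h(x)=\frac{1-\alpha x}{\alpha-(2\alpha-1)x}$, the Schwarzian comparison being the crux, and then invoke Theorem~\ref{CORL_fractional}.)
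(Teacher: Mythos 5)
The paper offers no proof of this proposition at all: it is imported verbatim from \cite{Liz2007} as background material, so there is no in-paper argument to compare yours against. Judged on its own terms, your proposal is set up correctly --- the reduction of global stability to the absence of prime-period-two points via Theorem \ref{coppel}, the observation that a hypothetical $2$-cycle $\{p,q\}$ must straddle $x^*=1$, the Schwarzian chain rule $S(f\circ f)(x)=Sf(f(x))f'(x)^2+Sf(x)$, and the minimum principle for $|g'|$ are all the right tools, and your treatment of the case $x_c\notin[p,q]$ (monotone branch, Rolle count of fixed points of $g$, $g'\ge\mu\ge1$ versus $g'(1)\le1$) is essentially complete and correct, as is the boundary bookkeeping for $x_c\in\{p,1,q\}$ and $x_c\in(1,q)$.

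The genuine gap is the case $x_c\in(p,1)$, which you explicitly leave as ``the delicate step.'' This is not a peripheral case: for the models the proposition is actually used on (e.g.\ Ricker with $r$ near $2$), any $2$-cycle that could exist would have the critical point strictly between $p$ and $x^*$, so this is the case carrying all the content. Your outline for it --- pass to the forward-invariant interval $[f(f(x_c)),f(x_c)]$, invoke Singer's theorem to show the immediate basin of any non-repelling cycle of $g=f^2$ must absorb a point of $E=\{x_c\}\cup f^{-1}(x_c)$, argue that a single critical orbit cannot feed both $\{1\}$ and $\{p,q\}$, conclude $\{p,q\}$ is repelling, and then derive a contradiction by a local analysis --- is a plausible program, but none of its nontrivial steps are executed: you do not count or exclude the extra fixed points of $g$ on the non-monotone interval, you do not verify that the basin of the fixed point $0$ does not absorb the critical orbit, you do not show how ``$\{p,q\}$ repelling'' actually yields a contradiction (a repelling $2$-cycle can coexist with a globally attracting interior fixed point in general; one must use that $p,1,q$ are consecutive fixed points of $g$ with no others between them, which is exactly the fixed-point count you skipped), and the neutral subcase $|f'(1)|=1$ is deferred rather than handled. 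The closing parenthetical alternative --- that $Sf<0$ and $|f'(1)|\le1$ force enveloping by a linear fractional involution so that Theorem \ref{CORL_fractional} applies --- is itself an unproved substantial claim, not a fallback. As it stands the proposal is an honest and well-organized reduction plus a correct treatment of the easy case, but not a proof.
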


Later on, Rubi\'{o}-Masseg\'{u} and Ma\~{n}osa in \cite{manosa2007} brings up the importance of the enveloping function since the enveloping implies the existence of a Global Lyapunov function. Thus the global asymptotically stability can be seen as a consequence of an invariant principle.

Our main objective in this paper is to extend the theory of enveloping to one-dimensional periodic population models.

In Section \ref{ps} we present some preliminaries concerning the theory of non-autonomous periodic difference equations. In the next section we present our main results, i.e., we give the condition for which individual enveloping implies periodic enveloping for mappings. The next section is devoted to applications. We illustrate our results in several well known models in population dynamics as is the cases of  the periodic Ricker model, the periodic generalized Beverton-Holt model and the periodic logistic model. A mixing population models are studied as well.

\section{Periodic systems}\label{ps}
A difference equation is called non-autonomous if it is governed by the rule
\begin{equation}
x_{n+1}= f_{n}(x_{n}), n \in \mathbb{Z}^{+},  \label{nade}
\end{equation}
where $x\in X$ and $X$ is a topological space. Here the orbit of a point $x_0$ is generated by the composition of the sequence of maps 
\begin{equation*}
f_0,f_1,f_2,\ldots.
\end{equation*}
Explicitly, 
\begin{eqnarray*}
x_1&=&f_0(x_0), \\
x_2&=&f_1(x_1)=f_1\circ f_0(x_0), \\
&&\vdots \\
x_{n+1}&=&f_{n}\circ f_{n-1}\circ\ldots\circ f_1\circ f_0(x_0), \\
&&\vdots
\end{eqnarray*}

If the sequence of maps is periodic, i.e., $f_{n+p}=f_n$, for all $%
n=0,1,2\ldots$ and some positive integer $p>1$, then we talk about
non-autonomous periodic difference equations. Systems where the sequence of
maps is periodic, model population with fluctuation habitat, and they are
commonly called periodically forced systems. 

Throughout  this paper we work with non-autonomous periodic difference equation in which $p$ is the minimal period of Equation  (\ref{nade}) and $X=\mathbb{R}$.

Notice that the non-autonomous periodic difference equation (\ref{nade})
does not generate a discrete (semi)dynamical system \cite{ESUAE} as it may
not satisfy the (semi)group property. One of the most effective ways of
converting the non-autonomous difference equation (\ref{nade}) into a
genuine discrete (semi)dynamical system is the construction of the
associated skew-product system as described in a series of papers by Elaydi
and Sacker \cite{ElSa2005, ElSa2005a, ESUAE, ES2006}. It is noteworthy to
mention that this idea was originally used to study non-autonomous
differential equations by Sacker and Sell \cite{SS1977}.

\begin{definition}
An ordered set of points $C_r=\left\{%
\overline{x}_{0},\overline{x}_{1},\ldots,\overline{x}_{r-1}\right\}$ is an $
r-$periodic cycle in $X$ if 
\begin{equation*}
f_{(i+nr)\bmod p}(\overline{x}_{i}) = \overline{x}_{(i+1)\bmod r} , n\in\mathbb{Z}^+.
\end{equation*}
In particular,
\[f_i(\overline{x}_{i}) = \overline{x}_{i+1}, 0\leq i\leq r-2,\]
and
\[f_t(\overline{x}_{t\bmod r}) = \overline{x}_{(t+1)\bmod  {r}}, 
r-1 \leq t \leq p-1.\]
\label{dpc}
\end{definition}

It should be noted that the $r-$periodic cycle $C_{r}$ in $X$ generates an $
s-$periodic cycle on the skew-product $X \times Y$ ($Y=\{f_0,f_1,\ldots,f_{p-1}\}$) of the form 
\begin{equation*}
\widehat{C}_{s}= \{(\overline{x}_0,f_0),(\overline{x}_1,f_1),...,(\overline{x
}_{(s-1)\bmod r},f_{(s-1)\bmod p})\},
\end{equation*}
where $s = lcm[r, p]$ is the least common multiple of $r$ and $p$.

To distinguish these two cycles, the $r-$periodic cycle $C_{r}$ on $X$ is
called an $r-$geometric cycle (or simply $r-$periodic cycle when there is no
confusion), and the $s-$periodic cycle $
\widehat{C}_s$ on $X\times Y$ is called an $s-$complete cycle. Notice that either $r<p$, or $r=p$ or $r>p$.

Define the composition operator $\Phi$ as follows
\[
\Phi_n^i=f_{n+i-1}\circ\ldots\circ f_{i+1}\circ f_i.
\]
When $i=0$ we write $\Phi_n^0$ as $\Phi_n$.

As a consequence of the above remarks it follows that the $s-$complete cycle $\widehat{C}_s$ is a fixed point of the composition operator $\Phi_s^i$. In other words we have that 
\[\Phi_s^i(\overline{x}_{i \bmod  {r}})=\overline{x}_{i \bmod  {r}}.\] 
If the sequence of maps $\{f_i\},i\geq 0$ is a parameter family of maps one-to-one in the parameter, then by \cite{ELOspringer2011} we have that $\overline{x}_{i \bmod  {p}}$ is a fixed point of $\Phi_p$.

\section{Enveloping in periodic models}
In this section we extend the idea of enveloping to periodic single species population models.

In population dynamics it is common to work with a parameter family of maps. If we are working with a certain population model, we can always rescale the positive fixed point to $x^*=1$. Hence, under this scenario, a parameter family of maps will have the same positive fixed point. Having in mind this idea, we point out the first  assumption of our work.

\textit{\textbf{H1 -} Let $\mathcal{F}=\left\{f_0,f_1,f_2,\ldots\right\}$ to be a set of $C^1$ population models such that $f_i(1)=1$, for all $i=0,1,2,\ldots$. Assume that the composition 
\[
\Phi_p(x)=f_{p-1}\circ\ldots\circ f_1\circ f_0(x)
\]
is continuous in a subset of the nonnegative reals. Further, in order to guaranty periodicity of the equation $$x_{n+1}=f_n(x_n),$$ we also require that the maps on $\mathcal{F}$ are periodic with period $p$, i.e., $f_{n+p}=f_n$, for all $n$.
}

A natural question arises in the field of periodic difference equations: is the composition of population models a population model?

\begin{figure}
\centering
\includegraphics[scale=0.25]{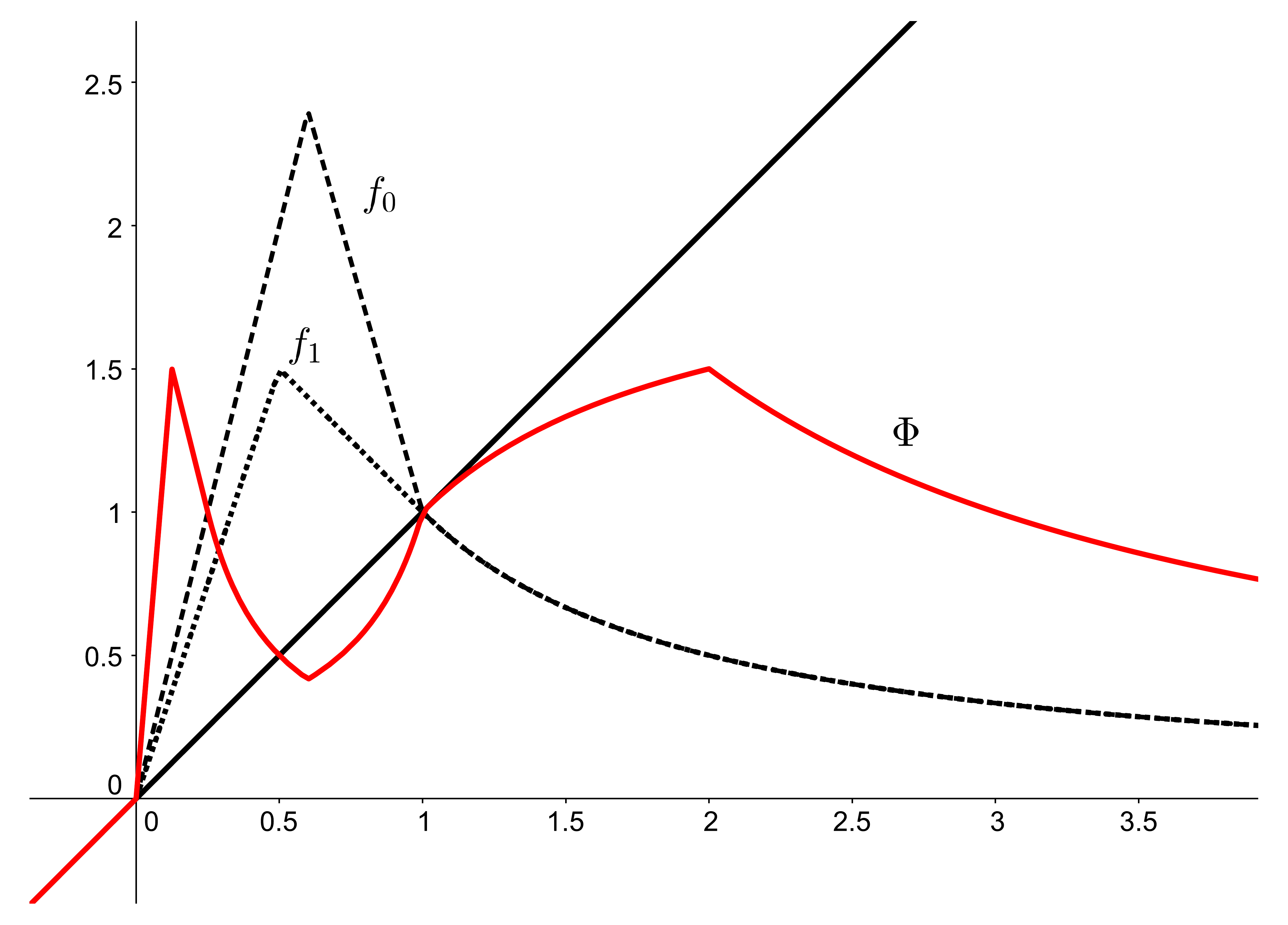}
\caption{This example shows that the composition of population models may not be a population model}
\label{fig:picewise}
\end{figure}

The answer of this question, in general, is negative, it depends on the individual maps. In Figure \ref{fig:picewise} we present a concrete example, where
\[
f_{0}(x)=\left\{ 
\begin{array}{l}
4x\text{ if }0\leq x<0.6 \\ [5pt]
-3.5x+4.5\text{ if }0.6\leq x<1 \\ [5pt]
\dfrac{1}{x}\text{ if }x>1
\end{array}
\right. 
\]
and
\[
f_{1}(x)=\left\{ 
\begin{array}{l}
3x\text{ if }0\leq x<0.5 \\ [5pt]
-x+2\text{ if }0.5\leq x<1 \\ [5pt]
\dfrac{1}{x}\text{ if }x>1
\end{array}
\right. .
\]

Observe that the maps of $\mathcal{F}$  are increasing in certain interval. Hence, one can show the following proposition:

\begin{proposition}
Under hypothesis \textbf{H1} the composition map $\Phi_p(x)$ is increasing in  $(0,c_\Phi)$, for certain positive value $c_\Phi$. Moreover, there exists $x_\Phi<x^*_\Phi$ such that $\Phi_p(x)>f_i(x)$,  for all $x\in (0,x_\Phi)$, $i\in\{0,1,\dots, p-1\}$ with $\Phi_p(x^*_\Phi)=x^*_\Phi$. 
\label{Linc}
\end{proposition}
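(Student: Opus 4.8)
The plan is to localize everything near the origin, where the population-model axioms are sharpest, and to exploit that each $f_i$ is automatically monotone there. First I would record the preliminary fact that each $f_i$ is strictly increasing on a right-neighbourhood of $0$: since $f_i(0)=0$ and $f_i(x)>x$ on $(0,1)$, the difference quotient $f_i(x)/x$ exceeds $1$ there, so $f_i'(0)=\lim_{x\to0^+}f_i(x)/x\ge 1>0$; as $f_i\in C^1$, continuity of $f_i'$ produces $\delta_i>0$ with $f_i'>0$ on $[0,\delta_i)$. Throughout I write $g_0=\mathrm{id}$ and $g_k=f_{k-1}\circ\cdots\circ f_0$ for $1\le k\le p$, so $\Phi_p=g_p$ and $g_k(0)=0$ for every $k$.

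For the monotonicity claim I would induct on $k$. The base case $g_0=\mathrm{id}$ is increasing; assuming $g_k$ is strictly increasing on some $[0,a_k)$, continuity of $g_k$ together with $g_k(0)=0$ gives $a_{k+1}\in(0,a_k]$ with $g_k([0,a_{k+1}))\subseteq[0,\delta_k)$, and on that interval $g_{k+1}=f_k\circ g_k$ is a composition of strictly increasing maps, hence strictly increasing, with $g_{k+1}(0)=0$. After $p$ steps this yields that $\Phi_p$ is increasing on $[0,c_\Phi)$ with $c_\Phi:=a_p$, which is the first assertion.

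For the enveloping claim, note first that $f_i(1)=1$ for all $i$ forces $\Phi_p(1)=1$, so one may take $x^*_\Phi=1$, and it then suffices to exhibit $x_\Phi\in(0,1)$ with the stated inequality. Using continuity of the finitely many maps $g_0,\dots,g_p$ and $g_k(0)=0$, I would choose $x_\Phi\in(0,\min\{1,c_\Phi\})$ so small that for every $x\in(0,x_\Phi)$ the iterates $y_k:=g_k(x)$ satisfy $y_k<1$ for $k\le p$ and $y_k\in[0,\delta_k)$ for $k\le p-1$. For such $x$ each $y_k$ lies in $(0,1)$, so $y_{k+1}=f_k(y_k)>y_k$, whence $x=y_0<y_1<\cdots<y_p=\Phi_p(x)$. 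Fixing $i\in\{0,\dots,p-1\}$: since $x\le y_i$ with both in $[0,\delta_i)$, local monotonicity of $f_i$ gives $f_i(x)\le f_i(y_i)=y_{i+1}$; if $i\le p-2$ then $f_i(x)\le y_{i+1}<y_{i+2}\le y_p=\Phi_p(x)$, while if $i=p-1$ then $p>1$ gives $y_{p-1}\ge y_1>y_0=x$ and strict local monotonicity of $f_{p-1}$ gives $f_{p-1}(x)<f_{p-1}(y_{p-1})=\Phi_p(x)$. Thus $\Phi_p(x)>f_i(x)$ for all $x\in(0,x_\Phi)$ and all $i$, with $x_\Phi<1=x^*_\Phi$.

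The $\eps$--$\delta$ bookkeeping that traps the finitely many intermediate iterates $y_k$ inside the common region $[0,\delta_k)\cap[0,1)$ is routine once one notes all the $g_k$ vanish at $0$. The step I expect to need the most care is the boundary index $i=p-1$: there $\Phi_p$ does not ``gain'' an extra forward iterate over $f_{p-1}$, so the strict inequality $\Phi_p(x)>f_{p-1}(x)$ cannot come from a population inequality $f(y)>y$ and must instead be extracted from $y_{p-1}>x$ --- which is precisely where the standing hypothesis $p>1$ is used --- together with the strict local monotonicity of $f_{p-1}$ near the origin.
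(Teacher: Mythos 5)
Your proof is correct, and its first half (composing the locally increasing branches of the $f_i$ and tracking the intervals by induction) is essentially the paper's argument made precise; the paper simply invokes that a composition of monotone maps is monotone on a suitable $(0,c_\Phi)$. For the enveloping inequality $\Phi_p(x)>f_i(x)$, however, you take a genuinely different route. The paper linearizes at the origin: it asserts $f_i'(0)>1$ for every $i$ (the origin being an unstable fixed point), computes $\Phi_p'(0)=\prod_{i=0}^{p-1}f_i'(0)>f_i'(0)$ by the chain rule, and, since all maps vanish at $0$, concludes that $\Phi_p(x)>f_i(x)$ for small $x$ by comparing slopes. You instead use the orbit monotonicity $x=y_0<y_1<\cdots<y_p=\Phi_p(x)$ forced by $f(y)>y$ on $(0,1)$, together with local monotonicity of $f_i$, to get $f_i(x)\le f_i(y_i)=y_{i+1}<\Phi_p(x)$, with the boundary index $i=p-1$ handled via $y_{p-1}>x$. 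Your version is more elementary and slightly more general: it needs only $f_i'(0)\ge 1$ (really just local monotonicity plus $f(x)>x$ on $(0,1)$), whereas the paper's comparison of derivatives at $0$ degenerates if some $f_i'(0)=1$, a case the stated axioms of a population model do not by themselves exclude. The paper's argument, once strict repulsion at the origin is granted, is shorter and makes the mechanism transparent: the composite has a strictly larger multiplier at $0$ than any individual map.
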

\begin{proof}

It follows from our hypothesis that each individual map $f_i$ is increasing on $(0,c_i)$, for some $c_i>0$ (eventually the map $f_i$ can be increasing in all the domain). Since the composition of monotone mappings is a monotone map,  it follows that there exists $c_\Phi>0$, the minimum of the critical values of $\Phi_p$,  such that $\Phi_p(x)$ is an increasing function on $(0,c_\Phi)$.

Since each one of the individual maps $f_i$ is increasing on $(0,c_i)$ and the origin is an unstable fixed point of all the maps, we have that $f_i^\prime(0)>1$, for all $i\in\{0,1,\dots, p-1\}$. Taking the derivative of the composition map we have 
\begin{eqnarray*}
\Phi_p^\prime(x)&=&f^\prime_{p-1}(\Phi_{p-1}(x))\times f^\prime_{p-2}(\Phi_{p-2}(x))\times\ldots f_1^\prime(f_0(x))\times f^\prime_0(x)\\
&=&\prod_{i=0}^{p-1}f^\prime_i(\Phi_i(x)).
\end{eqnarray*}
Hence, at the origin we have 
\[
\Phi_p^\prime(0)=\prod_{i=0}^{p-1}f^\prime_i(\Phi_i(0))>> f^\prime_i(0), i\in\{0,1,\dots, p-1\}.
\]
This implies that there exists a positive number $x_\Phi<x_\Phi^*$ such that 
\[
\Phi_p(x)>f_i(x),\text{~~for all~~}   x\in (0,x_\Phi), ~~i\in\{0,1,\dots, p-1\}.
\]

\end{proof}
\bigskip

The next assumption will be in the enveloping function. First, let us observe the following example where we show that ``\textbf{individual enveloping}" do not implies ``\textbf{periodic enveloping}".
\begin{example}
Let $f_0(x)=xe^{1.5(1-x)}$ and $f_1(x)=xe^{1.2(1-x)}$ and assume that $f_{n+2}=f_{n}$, for all $n$. It is clear that $f_0,f_1\in\mathcal{F}$ and $x_{n+1}=f_{n}(x_n)$ is a $2-$periodic difference equation. 

Each one of the maps $f_0$ and $f_1$ is enveloped by the map $g(x)=xe^{2(1-x)}$. The map $g(x)$ is a globally asymptotically stable  population model since it is enveloped by the decreasing fractional function $h(x)=2-x$ (for more details see \cite{CWW2008}). Moreover, from Theorem \ref{GlobalSt} one can conclude that the individual maps $f_i(x)$, $i=0,1$ are globally stable since they are envelop by $g(x)$.

To study the dynamics of the $2-$periodic difference equation we study the dynamics of the map
\[
\Phi_2(x)=f_1\circ f_0(x)=x e^{2.7-1.5x-1.2 x e^{1.5(1-x)}}.
\]
Plotting the graph of $\Phi_2(x)$ one can conclude that there exists a positive value $a<1=x^*_{\Phi}$ such that 
\[
\Phi_2(x)>xe^{2(1-x)}, \text{ for all }x\in(0,a)
\]
and 
\[
\Phi_2(x)<xe^{2(1-x)}, \text{ for all }x\in(a,1).
\]
Consequently, $\Phi_2(x)$ is not enveloped by $g(x)$. However, the individual maps $f_0$ and $f_1$ are enveloped by $g(x)$.  Hence, we can not conclude stability of $\Phi_2(x)$ from the individual enveloping $g(x)$. In Section \ref{grm} we will show that $\Phi_2(x)$ is also enveloped by $h(x)=2-x$ and consequently from Theorem \ref{CORL_fractional}  it is globally stable. 
\end{example}

Our goal in this work is to find certain class of maps where \underline{\textbf{individual enveloping}} implies \underline{\textbf{periodic enveloping}}. Due the preceding examples we have to guarantee: (i) the composition of population models is a population model and (ii) the individual enveloping is also an enveloping for the  composition map, i.e., we have to guarantee that the composition map has two fixed points, the origin and a positive fixed point $x_{\Phi}^*$, $\Phi_p(x)>x$ if $x\in(0,x^*_\Phi)$ and $\Phi(x)_p<x$ if $x>x^*_\Phi$, and there exits an enveloping for a sequence of the individual maps $f_i$ that envelops the composition map. These observations motivates the second assumption:

\textit{\textbf{H2 -} There exits a decreasing envelop $h$ such that $h(x)$ envelops all the maps in the set $\mathcal{F}$ and $h\circ h(x)=x$.}

The following proposition may help us in the construction of an envelop. We will omit the prove since the result follows directly by symmetry.

\begin{proposition}\label{PENV}
Let $h$ to be an enveloping of a population model $f$  in the conditions of hypothesis \textbf{H2}.
Consider the graph of $f$ and the curve, 
$S_f$, obtained from the graph of $f$ by symmetry with respect to the diagonal $y=x$. Then, the graph of $h$ lies  between the graph of $f$ and the curve $S_f$ everywhere, with the exception of the fixed point $x^*=1$.
\end{proposition}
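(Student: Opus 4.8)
The plan is to argue purely geometrically, using the defining inequalities of an enveloping together with the reflection symmetry of the curve $S_f$ across the diagonal, and to treat the two regions $0<x<1$ and $x>1$ separately. First I would fix notation: write $y=f(x)$ for the graph of $f$, and note that the reflected curve $S_f$ is, on the relevant range, the graph of the inverse relation $x=f(y)$, i.e. a point $(x,y)$ lies on $S_f$ exactly when $(y,x)$ lies on the graph of $f$. Since $h$ is decreasing and satisfies $h\circ h(x)=x$, the graph of $h$ is its own reflection across $y=x$; this self-symmetry is the feature that makes the claim plausible and will be used at the end.

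For $0<x<1$: by condition (i) in the definition of enveloping, $h(x)>f(x)$, so the graph of $h$ lies strictly above the graph of $f$ there. It remains to show $h(x)$ lies below $S_f$, i.e. that the point $(x,h(x))$ is below the reflected curve. Because $h$ is an involutive decreasing map fixing $1$, for $x<1$ we have $h(x)>1$; reflecting the point $(x,h(x))$ across the diagonal gives $(h(x),x)=(h(x),h(h(x)))$, which is again on the graph of $h$ — and since $h(x)>1$, the enveloping condition (ii) applied to the argument $h(x)>1$ gives $h(h(x))<f(h(x))$, i.e. $x<f(h(x))$. Unwinding this via the reflection shows precisely that $(x,h(x))$ sits on the $S_f$-side one wants: the value of $h$ at $x$ does not exceed the $S_f$-height over $x$. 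I would write this out as: the $S_f$-curve over abscissa $x$ is the set of ordinates $y$ with $f(y)=x$; monotonicity of $f$ near the relevant points plus $f(h(x))>x$ forces $h(x)$ to lie on the correct side. The case $x>1$ (with $h(x)>0$, $f(x)>0$) is symmetric: condition (ii) gives $h(x)<f(x)$, so $h$ is below $f$; and reflecting, using $h(x)<1$ and condition (i) at the argument $h(x)\in(0,1)$, gives $h(x)$ above $S_f$. Thus in both regions the graph of $h$ is sandwiched strictly between the graph of $f$ and $S_f$, with equality only at the common fixed point $(1,1)$.

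The main obstacle I anticipate is making the phrase "lies between the graph of $f$ and the curve $S_f$" precise when $f$ is not monotone, since then $S_f$ need not be a graph (a vertical line can meet it several times) and "above/below $S_f$" must be interpreted with care — one wants the statement relative to the branch of $S_f$ that is the mirror image of the branch of $f$ passing near $(x,f(x))$. I would handle this by phrasing the conclusion in the symmetric two-sided form actually needed downstream: for $x<1$, $f(x)<h(x)$ and $f(h(x))>x$ (the latter being exactly "$h$ below $S_f$"), and the mirror statement for $x>1$; these four inequalities are literally the enveloping conditions (i) and (ii) read at $x$ and at $h(x)$, so the proof reduces to the single observation that $h$ maps $(0,1)$ onto $(1,x_h)$ bijectively and involutively, which lets one feed $h(x)$ back into the enveloping hypothesis. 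Given that reduction the argument is immediate, which is why the authors felt entitled to omit it; I would include just the reflection bookkeeping above to make "between" unambiguous.
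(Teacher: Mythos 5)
Your argument is correct and is exactly the symmetry argument the paper has in mind: the paper omits the proof entirely, stating only that ``the result follows directly by symmetry,'' and your writeup supplies precisely that reasoning (conditions (i)--(ii) read at $x$ and at $h(x)$, combined with $h$ being a decreasing involution fixing $1$, so that its graph is its own reflection across $y=x$). Your closing caveat about how to interpret ``between'' when $f$ is non-monotone and $S_f$ is not a graph is a legitimate point of imprecision in the statement itself, and your resolution via the four inequalities $f(x)<h(x)$, $f(h(x))>x$ (and their mirrors for $x>1$) is the right way to make it rigorous.
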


We are now ready to present our main result of this section. It states the conditions in which individual enveloping implies periodic enveloping.
\begin{theorem}\label{GS}
Under hypothesis \textbf{H1} and \textbf{H2}, the composition map $\Phi_p(x)$ is a globally asymptotically stable population model.
\end{theorem}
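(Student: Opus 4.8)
The plan is to deduce the theorem from Cull's Theorem \ref{thmonotone}: it suffices to show that the common decreasing involutive envelope $h$ of hypothesis \textbf{H2} also envelops the composition $\Phi_p$, and that $\Phi_p$ is itself a population model. Indeed, since $f_i(1)=1$ for every $i$ we get $\Phi_p(1)=1$, and Proposition \ref{Linc} already records that $\Phi_p$ is increasing near $0$ with a positive fixed point; once we know $\Phi_p$ is a population model enveloped by the monotone decreasing $h$ with $h\circ h=\mathrm{id}$ and $h>0$ on $(0,x_h)$, $x_h>1$, Theorem \ref{thmonotone} gives $\lim_{n\to\infty}\Phi_p^n(x)=1$ for all $x>0$, which is exactly global asymptotic stability of $\Phi_p$.

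The heart of the matter is a composition lemma: \emph{if $f$ and $g$ are population models, each enveloped by the decreasing involution $h$, then $g\circ f$ is again a population model enveloped by $h$.} Granting this, a finite induction does the rest: the base case is $\Phi_1=f_0$, and the inductive step applies the lemma with $f=\Phi_k$ and $g=f_k$ to get that $\Phi_{k+1}=f_k\circ\Phi_k$ is a population model enveloped by $h$; after $p$ steps we obtain the claim for $\Phi_p$.

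To prove the composition lemma I would use two elementary properties of $h$ repeatedly: being decreasing, $a<b\iff h(a)>h(b)$; and being an involution, $f(x)<h(x)\iff x<h(f(x))$ and $f(x)>h(x)\iff x>h(f(x))$. Fix $x\in(0,1)$; the goal is $x<g(f(x))<h(x)$, i.e.\ $g\circ f$ lies strictly between the diagonal and $h$ there, which is precisely ``population model plus enveloped by $h$'' on $(0,1)$ in the sense of Proposition \ref{PENV}. Split on the location of $f(x)$ relative to $1$. If $0<f(x)\le 1$, then $f(x)>x$ yields $g(f(x))\ge f(x)>x$, while $g(f(x))\le h(f(x))<h(x)$ using that $h$ envelops $g$ and $h$ is decreasing. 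If $f(x)>1$, then $g(f(x))<f(x)<h(x)$ since $h$ envelops $f$ on $(0,1)$, and $f(x)<h(x)$ combined with the involution gives $h(f(x))>x$, whence $g(f(x))>h(f(x))>x$ because $h$ envelops $g$ on $(1,x_h)$. The case $x\in(1,x_h)$ is the mirror image and gives $h(x)<g(f(x))<x$. Finally one checks the remaining population-model axioms for $\Phi_p$ --- $\Phi_p(0)=0$, boundedness on $[0,1]$, and $\Phi_p(x)<x$ for \emph{all} $x>1$ (not just on $(1,x_h)$) --- by a short induction using continuity and boundedness of each $f_i$ on compact sets together with the monotonicity near $0$ from Proposition \ref{Linc}.

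I expect the main obstacle to be the composition lemma, and inside it the overcompensatory case $f(x)>1$ for $x\in(0,1)$ (and its mirror $f(x)<1$ for $x\in(1,x_h)$): there the direct bound on $g(f(x))$ points the wrong way, and one genuinely needs $h\circ h=\mathrm{id}$ together with monotonicity of $h$ to fold the estimate back across the diagonal --- this is exactly why \textbf{H2} asks for an involutive envelope rather than an arbitrary globally stable one (note $h$ itself is \emph{not} globally stable, so Theorem \ref{GlobalSt} cannot be invoked directly). A secondary, more routine difficulty is the bookkeeping of positivity: ensuring $f(x)>0$, $g(f(x))>0$ and $h(f(x))>0$ wherever the enveloping inequalities are used, which is where the standing assumption that the $f_i$ are genuine population models (positive on the positive reals) enters.
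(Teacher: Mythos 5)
Your proposal is correct and follows essentially the same route as the paper: the same case split on whether $f(x)$ lies above or below $1$, the same use of monotonicity of $h$ together with $h\circ h=\mathrm{id}$ to fold the estimate back across the diagonal, and the same final appeal to Theorem \ref{thmonotone}. Your explicit formulation as a composition lemma plus finite induction (and your flagging of the remaining population-model axioms for $\Phi_p$) is a slightly tidier packaging of the argument the paper carries out only for $p=2$.
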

\begin{proof}
The prove follows by induction. We will show the result for the composition of two maps ($p=2$) and omit the general case ($p>2$).

Combining \textbf{H1} and \textbf{H2} it follows from Theorem \ref{thmonotone} that $f_i(x)$, $i=0,1,2,\ldots$ is a sequence of globally asymptotically stable population models. 

Since $f_i(0)=0$ and $f_i(1)=1$, for all $i=0,1,2,\ldots$ it follows that $\Phi_p(0)=0$ and $\Phi_p(1)=1$. The uniqueness of $x^*_{\Phi}=1$ will follow in the following arguments.

Let us first study the composition map $\Phi_2(x)=f_1\circ f_0(x)$. Since $\Phi_2(1)=1$ we split the prove into two cases: \textbf{I} - $0<x<1$ and \textbf{II} - $x>1$.

\textbf{Case I:} If $0<x<1$, either $f_0(x)>1$ or $x<f_0(x)<1$ (recall that $f_0(x)>x$ when $x\in(0,1)$). 

If $f_0(x)>1$ then $f_1\circ f_0(x)=f_1\left(f_0(x)\right)<f_0(x)<h(x)$. Since $h$ is decreasing and $f_1(y)>h(y)$, for all $y>1$, we have that 
\[
x=h\circ h(x)<h\circ f_0(x)<f_1\circ f_0 (x).
\]
If $x<f_0(x)<1$, then $h\circ f_0(x)<h(x)$. But, since $f_0(x)<1$ we have $f_1\circ f_0 (x)<h\circ f_0(x)<h(x)$. On  the other hand $x<f_0(x)<f_1\circ f_0(x)$.

\textbf{Case II:} Let $x>1$ and assume first $0<f_0(x)<1$. This implies that $f_1\circ f_0(x)>f_0(x)>h(x)$. In order to prove that $f_1\circ f_0(x)<x$ we notice that 
\[
x=h\circ h(x)>h\circ f_0(x)>f_1\circ f_0 (x).
\]
When $x>f_0(x)>1$, we have that $h\circ f_0(x)>h(x)$ and consequently $f_1\circ f_0(x)>h\circ f_0(x)>h(x)$. Since $f_0(x)>1$ it follows that $x>f_0(x)>f_1\circ f_0(x)$.

We have shown that $\Phi_2(x)$ is a population model and the enveloping function $h$ envelops the composition map $\Phi_2$. Consequently, from Theorem \ref{thmonotone} the map $\Phi_2(x)$ is a globally asymptotically stable population model.

\end{proof}
It will be beneficial, in certain cases, to have the contrapositive of this theorem which we write in the following corollary: 
\begin{corollary}\label{CGS}
If $f_0,f_1,\ldots, f_{p-1}$  are population models and the composition map $\Phi_p$ is not a population model, then it can not exist an enveloping $h$, in the conditions of hypotheses \textbf{H2}, such that $h$ envelops all the individual population models.
\end{corollary}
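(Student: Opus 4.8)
\textbf{Proof proposal for Corollary \ref{CGS}.}

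The plan is to argue by contraposition, using Theorem \ref{GS} directly. Assume, for the sake of contradiction, that $f_0,f_1,\ldots,f_{p-1}$ are population models (extended periodically by $f_{n+p}=f_n$, so that hypothesis \textbf{H1} is in force — the $C^1$ regularity, the common fixed point $f_i(1)=1$, and the continuity of $\Phi_p$ on a subset of the nonnegative reals all follow from the standing assumptions on population models), and suppose that there does exist a decreasing map $h$ satisfying the conditions of hypothesis \textbf{H2}, that is, $h\circ h(x)=x$ and $h$ envelops every $f_i\in\mathcal{F}$. Then both \textbf{H1} and \textbf{H2} hold simultaneously.

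Under these circumstances Theorem \ref{GS} applies verbatim and yields that the composition map $\Phi_p(x)=f_{p-1}\circ\cdots\circ f_1\circ f_0(x)$ is a globally asymptotically stable population model; in particular, $\Phi_p$ is a population model. This contradicts the hypothesis of the corollary that $\Phi_p$ is \emph{not} a population model. Hence no such enveloping $h$ can exist, which is exactly the assertion to be proved.

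I do not expect any genuine obstacle here: the corollary is a pure logical contrapositive of Theorem \ref{GS}, so once one checks that the hypotheses of the corollary really do supply all the ingredients of \textbf{H1} (the only subtlety worth spelling out is that being a population model already encodes $f_i(1)=1$ after normalization and the continuity/regularity needed for $\Phi_p$, and that periodicity is imposed by the convention $f_{n+p}=f_n$), the rest is immediate. The one point to state carefully in the write-up is that the negation of the conclusion of the corollary is precisely the validity of \textbf{H2}, so that invoking Theorem \ref{GS} is legitimate.
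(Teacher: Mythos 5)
Your argument is correct and is exactly how the paper treats this statement: the corollary is announced as the contrapositive of Theorem \ref{GS}, and no further proof is given. Your contraposition via Theorem \ref{GS}, together with the observation that the negation of the conclusion is precisely hypothesis \textbf{H2}, matches the paper's intended reasoning.
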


\section{Applications}
In this section we illustrate our results in  some one-dimensional periodic population models. We study a family of Ricker maps, a family of Beverton-Holt models, quadratic models, mixing models and harvesting models.

\subsection{Ricker model}\label{grm}
Let us consider the periodic difference equation given by the following equation 
\[
x_{n+1}=R_n(x_n),
\] 
where the sequence of maps $R_n(x)$ is given by
\begin{equation}
R_n(x)=x e^{r_n(1-x)},~~r_n>0, ~~n=0,1,2\ldots.
\end{equation}

The local stability condition $0<r_n\leq 2$, $n\in\{0,1,\ldots\}$ of each individual population model $R_i(x)$ implies global stability of $R_i(x)$, since each one of the individual maps  $R_n(x)$, $n\in\{0,1,\ldots\}$ is enveloped by $h(x)=2-x$, which is a fractional decreasing function with $\alpha=1/2$ (See Theorem \ref{CORL_fractional}). In other words $x^*=1$ is a globally asymptotically stable fixed point of $R_n(x)$, $n\in\{0,1,\ldots\}$.  Notice that $h\circ h(x)=x$. Thus, hypothesis \textbf{H2} is satisfied.

In order to have periodicity we require that  $R_{n+p}=R_{n}$, for all $n=0,1,2,\ldots$, i.e., the sequence of parameters satisfies $r_n=r_{n\bmod p}$ for all $n$. It is clear that the composition map 
$$\Phi_p(x)=R_{p-1}\circ\ldots\circ R_1\circ R_0(x)$$ is continuous in $\mathbb{R}^+_0$.
Consequently, hypothesis \textbf{H1} is satisfied. Hence, from Theorem \ref{GS} it follows that $\Phi_p(x)$ is a globally asymptotically stable population model, i.e.,  the $p-$periodic Ricker difference equation is globally stable whenever $r_n\in(0,2]$, $n=0,1,2,\ldots$.

Hence, in this family of population models, individual enveloping implies periodic enveloping.

\begin{figure}[t]
\centering
\includegraphics[scale=0.5]{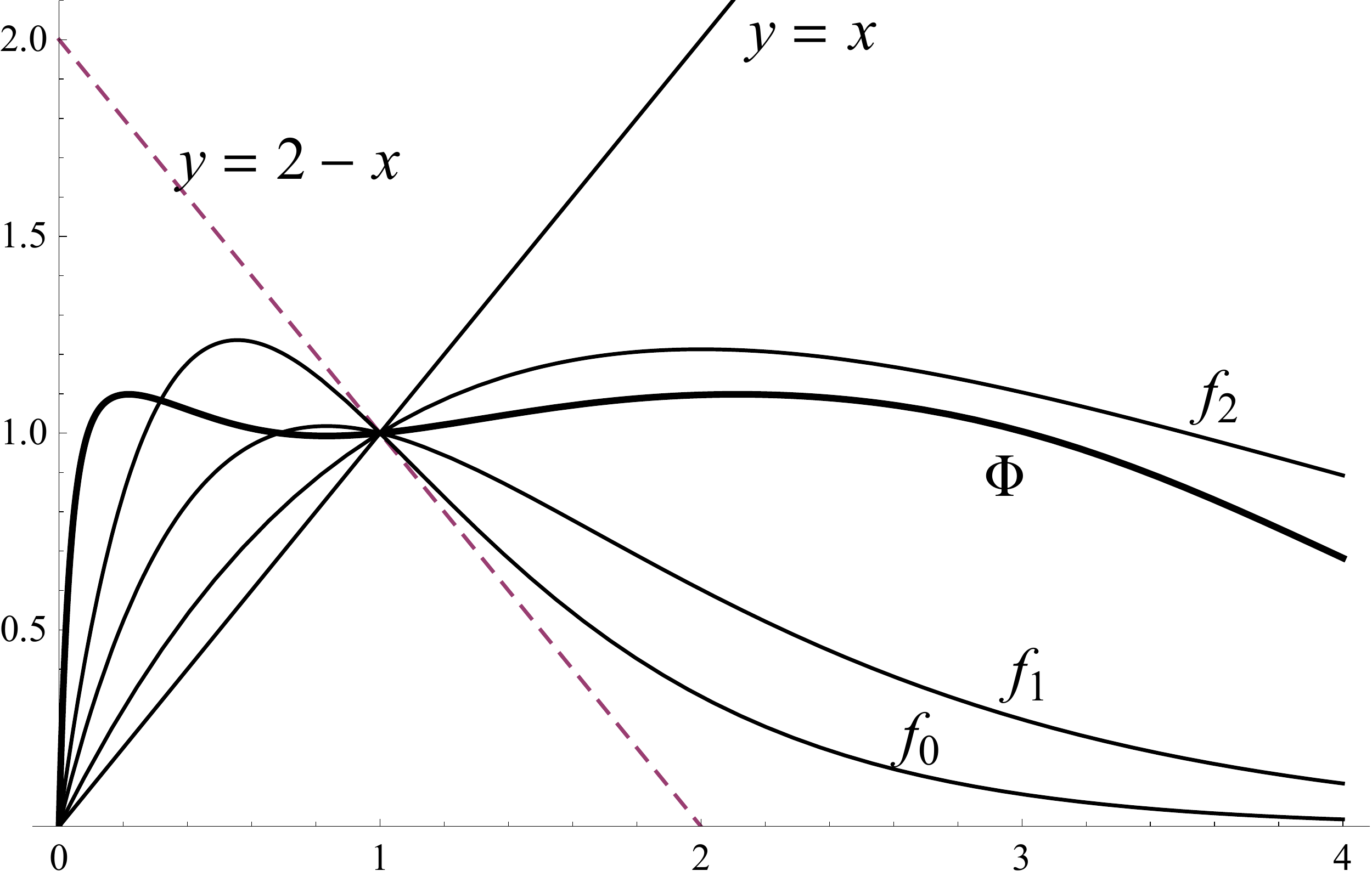}
\caption{An illustration of the ``individual enveloping" (tiny curves) and the ``composition enveloping" (solid curve) in the one-parameter family of a Ricker type map. In this case individual enveloping implies periodic  enveloping and consequently the global stability of the positive fixed point in the periodic equation.}
\label{fig:ERM}
\end{figure}

In Figure \ref{fig:ERM} is represented a concrete example where $r_0=1.8$, $r_1=1.2$ and $r_2=0.5$. The composition map $\Phi_3=R_2\circ R_1\circ R_0$ is represented by the solid curve. The dashed line is the enveloping function while the tiny curves are the individual population models.

Before ending this example, we notice that $\prod_{i=0}^{p-1}|1-r_i|<1$ is the stability condition  for the fixed point $x^*=1$ under the action of the composition map $\Phi_p$.   

Finally, we should mention that R. Sacker \cite{Sac2007} used a different method to show global stability of a similar periodic Ricker type model given by
\[
x_{n+1}=x_ne^{r_n-x_n}
\]
in the parameter region $0<r_n\leq 2$, $n=0,1,2,\dots,p-1$.

\subsection{Generalized Beverton-Holt model} 
Let $x_{n+1}=B_n(x_n)$, $n=0,1,2,\ldots$ where the map $B_n$ is given by
\begin{equation}
B_n(x)=\frac{\mu_nx}{1+(\mu_n-1)x^{c_n}}.
\label{beverton_map}
\end{equation}
Assume that $\mu_n>1$ and $0<c_n\leq 2$, for all $n=0,1,2,\dots$.   

The individual population map $B_n(x)$ has two fixed point, the origin and a positive fixed point given by $x^*=1$. It is easily shown that the origin is an unstable fixed point since $|B_n^\prime(0)|=\mu_n>1$, for all $n=0,1,2,\dots$. The condition of local stability of the positive fixed point is given by $\mu_n(c_n-2)\leq c_n$, for all $n=0,1,2,\dots$. This condition implies global stability  since each individual map $B_n(x)$, $n=0,1,2,\ldots$ is enveloped by  
\[
h(x)=\dfrac{1}{x},
\]
which is a decreasing fractional function with $h\circ h(x)=x$. Hence, \textbf{H2} is satisfied.

Let us assume now the periodicity of the map $B_n$ by taking $\mu_{n+q}=\mu_n$ and $c_{n+r}=c_n$ for some $q,r=1,2,3,\ldots$. This implies that $B_{n+p}=B_n$, where $p=lcm(q,r)$. From the fact that $r_n>1$, for all $n$, it follows that $1+(r_n-1)x^{c_n}>0$ whenever $x\in\mathbb{R}^+$. Hence, the composition of the Beverton-Holt models is well defined and thus we have the continuity of the composition. Consequently, \textbf{H1} is satisfied. It follows from Theorem \ref{GS} that 
\[
\Phi_p(x)=B_{p-1}\circ\ldots B_1\circ B_0(x)
\]
is a globally asymptotically stable population model. Consequently, 
the $p-$periodic Beverton-Holt equation $x_{n+1}=B_n(x_n)$, $B_{n+p}=B_n$, $n=0,1,2,\ldots$ is globally stable whenever $\mu_n>1$ and $0<c_n\leq 2$.

Notice that the condition of stability of the positive fixed point of  $\Phi_p$ 
is given by $$\prod_{i=0}^{p-1}|1+(\mu_i-1)(1-c_i)|<\prod_{i=0}^{p-1}\mu_i.$$ 

We should mention that when $c_n=1$, we have the classical Beverton-Holt model. In a series of papers \cite{CH2001,CH2002,SERS2004,ElSa2005,ElSa2005b,ElSa2005a,ESUAE,ES2006,Kocic2005,Kon2004,stevo2006}, the authors used a different method to study the global stability of the positive periodic cycle. 

In order to have a complete study of this model, it remains to study the cases where $c_n>2$ for all $n$, or a possible mixing case in the parameters $c_n$, i.e, some of the parameters are less or equal than $2$ and others are greater than $2$. Under these scenarios, the individual enveloping is
\[
h(x)=\left\{ 
\begin{array}{l}
\dfrac{1}{x}\text{ ~~if }c_{n}\leq 2 \\ \\
\dfrac{c_{n}-1-(c_{n}-2)x}{c_{n}-2-\left( c_{n}-3\right) x}\text{~~ if }c_{n}> 2
\end{array}
\right. .
\]
We point out that, in certain cases is possible to find a common enveloping for all the $p$ individual maps and hence global stability of the periodic equation. However, this is not the general case, as we show in the following concrete example.

Let $\mu_0=1.1$, $\mu_1=7$, $c_0=7.5$ and $c_1=2.3$. Hence, the individual maps are given by 
\[
f_0(x)=\dfrac{1.1x}{1+0.1x^{0.5}}~~\text{ and }~~f_1(x)=\dfrac{7x}{1+6x^{2.3}}.
\]
Both $f_0$ and $f_1$ are globally asymptotically stable population models with respect to the positive fixed point since they are enveloped by
\[
h_0(x)=\dfrac{6.5-5.5x}{5.5-4.5x}~~\text{ and }h_1(x)=\dfrac{1.3-0.3x}{0.3+0.7x},
\]
respectively (see Figure \ref{fig:No_Enveloping_Beverton_Holt}). 
\begin{figure}
\centering
\includegraphics[scale=0.5]{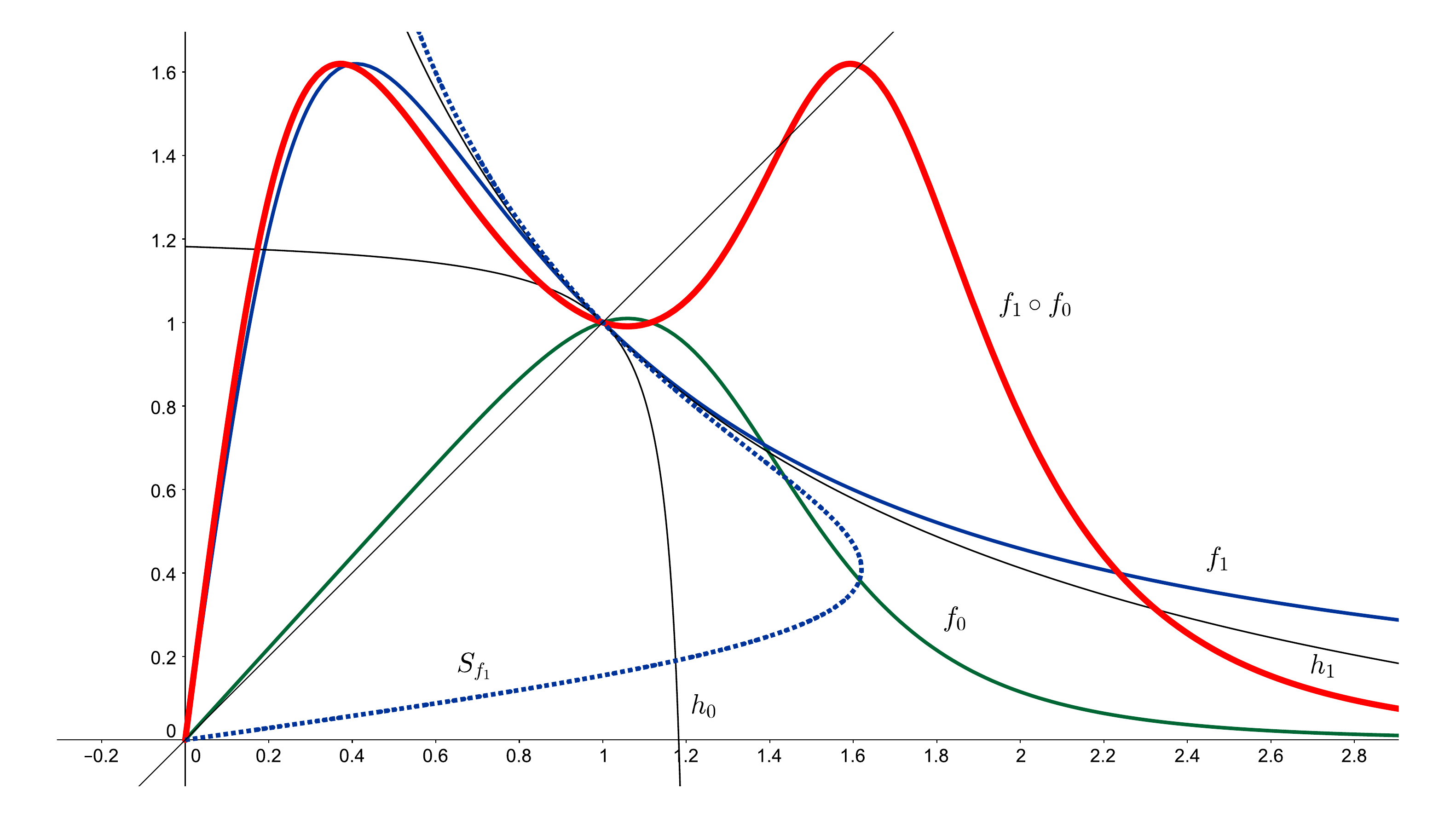}
\caption{An example showing that there is no Global Stability in the $2-$periodic Beverton-Holt model, when $\mu_0=1.1$, $\mu_1=7$, $c_0=7.5$ and $c_1=2.3$. In this case is not possible to find a decreasing enveloping that envelops simultaneously the individual population models $f_0$ and $f_1$.}
\label{fig:No_Enveloping_Beverton_Holt}
\end{figure}

As is clearly shown in Figure \ref{fig:No_Enveloping_Beverton_Holt} the composition map $\Phi_2(x)=f_1\circ f_0(x)$ has 3 positive fixed points and consequently can not be globally stable. Clearly, $\Phi_2$ is not a population model. From Corollary \ref{CGS} follows that  it can not exist a decreasing enveloping $h$, with $h\circ h(x)=x$, such that $h$ envelops simultaneously $f_0$ and $f_1$. 

Notice that, if that enveloping exits, then from Proposition  \ref{PENV} it lies between the graphs of $f_0$ and $f_1$ and the respective curves, $S_{f_0}$ and $S_{f_1}$, obtained from $f_0$ and $f_1$ by symmetry with respect to the diagonal $y=x$. As it is clearly shown in Figure \ref{fig:No_Enveloping_Beverton_Holt}, there exits an interval $]a,b[\subset]1.5,1.6[$ where such enveloping fails.

\subsection{Mixing models: Beverton-Holt acting with Ricker model}
Let us now consider that the sequence of maps is given by
\[
f_{n}(x)=\left\{ 
\begin{array}{l}
xe^{r_{n}(1-x)}\text{ if }n\text{ is even} \\ [10pt]

\dfrac{\mu _{n}x}{1+(\mu _{n}-1)x^{c_n}}\text{ if }n\text{ is odd}
\end{array}
\right. ,
\]
where $0<r_{n}\leq 2$, $\mu_{n}>1$ and $0<c_{n}\leq 1$ for all $n$. Assume the periodicity of the parameters, i.e., $r_{n+q}=r_n$, $\mu_{n+r}=\mu_n$ and $c_{n+s}=c_n$ for some positive integer $q$, $r$ and $s$. Hence, the equation $x_{n+1}=f_{n}(x_n)$ is $p-$periodic with $p=lcm (q,r,s)$. Clearly $f_n(x)\in \mathcal{F}$ and consequently \textbf{H1} is satisfied.

 Now, from Subsection \ref{grm} the sequence of maps $f_{2n}(x)$, $n=0,1,\ldots$ is enveloped by the decreasing function $h(x)=2-x$. It is easy to see that $$f_{2n+1}(x)=\dfrac{\mu _{2n+1}x}{1+(\mu _{2n+1}-1)x^{c_{2n+1}}},$$ $\mu_n>1$ and $0<c_n\leq 1$, $n=1,3,5,\ldots$ is also enveloped by $h(x)=2-x$. To see this observe that $f_{2n+1}(1)=h(1)$, $h$ is decreasing and $f_{2n+1}(x)$ is increasing since
\[
f^\prime_{2n+1}(x)=\dfrac{\mu_{2n+1}+\mu_{2n+1}(\mu_{2n+1}-1)(1-c_{2n})x^{c_{2n+1}}}{(1+(\mu _{2n+1}-1)x^{c_{2n+1}})^2}>0.
\]
This implies that \textbf{H2} is satisfied. Consequently,  from Theorem \ref{GS} follows that 
\[
\Phi_p(x)=f_{p-1}\circ\ldots f_1\circ f_0(x)
\]
is a globally asymptotically stable population model. Hence, 
the $p-$periodic difference equation $x_{n+1}=f_n(x_n)$, $n=0,1,2,\ldots$ is globally stable.

Finally we determine the stability condition under composition operator, which is given by $|\Phi^\prime_p(1)|<1$. A forward computation shows that
\[
\Phi ^{\prime }_p(1)=\left\{ 
\begin{array}{l}
\prod_{n=0}^{\frac{p}{2}-1}\dfrac{(1-r_{2n})(\mu_{2n+1}-(\mu_{2n+1}-1)c_{2n+1})}{\mu_{2n+1}}\text{   if }p\text{ is even} \\ [10pt]
(1-r_{p-1})\prod_{n=0}^{\frac{p-1}{2}-1}\dfrac{(1-r_{2n})(\mu_{2n+1}-(\mu_{2n+1}-1)c_{2n+1})}{\mu_{2n+1}}\text{ if }p\text{ is odd}%
\end{array}%
\right. .
\]

Notice that a similar approach can be done in the case that we consider the even sequence of maps a Beverton-Holt type and the odd sequence of maps a Ricker-type model.
\subsection{Exponential and rational}
Let us consider the non-autonomous difference equation
\[
x_{n+1}=\dfrac{(1+a_ne^{b_n})x_n}{1+a_ne^{b_nx_n}},
\]
where $0<b_n\leq 2$ and $a_n>0$, for all $n=0,1,2,\ldots$. This equation can be defined by the following map
\[
f_n(x)=\dfrac{(1+a_ne^{b_n})x}{1+a_ne^{b_nx}}.
\]
It is easy to check that the conditions of local stability at the fixed point $x^*=1$ $$a_n(b_n-2)e^{b_n}\leq 2$$ implies global stability since each map $f_n$ is enveloped by $h(x)=2-x$ in $\mathbb{R}^+_0$.

Let $a_{n+q}=a_n$ and $b_{n+r}=b_n$, for all $n=0,1,2,\ldots$. Then the sequence of maps is $p-$periodic where $p=lcm(q,r)$, i.e., the non-autonomous equation $x_{n+1}=f_{n}(x_n)$ is $p-$periodic. Clearly, the periodic composition map $\Phi_p(x)$ is continuous in $\mathbb{R}^+_0$ since $1+a_ne^{b_nx}\neq 0$ for all $x\geq 0$ and $0<b_n\leq 2$ and $a_n>0$, for all $n=0,1,2,\ldots$.

Since \textbf{H1} and \textbf{H2} are satisfied, from Theorem \ref{GS} follows the global stability in the non-autonomous periodic equation.

\subsection{Quadratic model}
Let $x_{n+1}=L_n(x_n)$, where $L_n(x)=x(1+\mu_n(1-x))$, $x\in I_n=\left[0,1+\dfrac{1}{\mu_n}\right]$, for all $n=0,1,2,\ldots$. The local stability condition for each individual population model $L_n(x)$, $n\in\{0,1,\ldots,p-1\}$ is given by $0<\mu_i\leq  2$. Since the fractional function $h(x)=\frac{4-3x}{3-2x}$ envelops each map $L_n(x)$, $n\in\{0,1,\ldots,p-1\}$, it follows that $L_n(x)$ is a globally asymptotically stable population model whenever $0<\mu_i\leq  2$. Notice that $h\circ h(x)=x$. Hence \textbf{H2} is satisfied.

Let us now assume the periodicity of the difference equation by taking $L_{n+p}=L_n$, for all $n=0,1,2,\ldots$, i.e., the sequence of  parameters are  $p-$periodic. In order to guarantee the continuity of the composition operator we construct the interval $J$ as follows. Let $I$ to be an interval given by
\[
I=\bigcap_{n=0}^{p-1}\left[0,1+\dfrac{1}{\mu_n}\right].
\]
The interval $J$ is defined by
\[
J=\bigcap_{i=0}^{p-1}L_n(I).
\]
Clearly $J\supseteq [0,1]$, $\Phi_p(1)=1$ and $\Phi_p(x)$ is continuous for all $x\in J$. Hence \textbf{H1}  is  satisfied. It follows from Theorem \ref{GS} that $$\Phi_p(x)=L_{p-1}\circ\ldots\circ L_1\circ L_0(x),~~x\in J $$ is a globally asymptotically stable population model whenever $0<\mu_n\leq  2$, $n=0,1,2,\ldots$. Consequently, $x^*=1$ is a globally stable fixed point of the  $p-$periodic difference equation  $x_{n+1}=L_n(x_n)$. 


\subsection{Beverton-Holt with harvesting}

Let us consider the difference equation $x_{n+1}=f_n(x_n)$, where the sequence of maps $f_n$ is given by
\[
f_n(x)=\dfrac{r_nx}{1+(r_n-1)x}-c_n x(x-1), r_n>1,~~0<c_n<1, \text{ for all }n.
\]
In this model we are taking $x$ in the interval $$I_n=\left[0, \dfrac{(r_n-2)\sqrt{c_n}-\sqrt{r_n(r_n(4+c_n)-4)}}{2(r_n-1)\sqrt{c_n}}\right].$$

Clearly $f_n(x)$ is a population model for all $x\in I_n$. Moreover, the extinction fixed point is unstable since $f_n^\prime(0)=r_n+c_n>1$ for all $n=0,1,2,\ldots$. The local stability condition for $x^*=1$ is given by
\[
0<c_n<\dfrac{1+r_n}{r_n}, \text{ for all }n.
\]
Since the fractional function $h(x)=\dfrac{11-8x}{8-5x}$ envelops each individual map $f_n(x)$, $n=0,1,2\dots$, it follows that $f_n(x)$ is a globally asymptotically stable population model whenever $x$ belongs to the interior of $I_n$ and $0<c_n<\dfrac{1+r_n}{r_n}$, $n=0,1,2\dots$. Notice that $h\circ h(x)=x$.

Let us now assume the periodicity of the parameters by taking $r_{n+q}=r_n$, and $c_{n+r}=c_n$ for all $n=0,1,2,\ldots$. Hence, the sequence of maps is $p-$periodic where $p=lcm(q,r)$. Define the interval $J$ as follows
\[
J=\bigcap_{n=0}^{p-1}f_n(I),~~\text{ where }~~ I=\bigcap_{n=0}^{p-1}I_n.
\]
Since \textbf{H1} and \textbf{H2} are satisfied, it follows from Theorem \ref{GS} that $$\Phi_p(x)=f_{p-1}\circ\ldots\circ f_1\circ f_0(x),~~x\in J$$ is a globally asymptotically stable population model
whenever $0<c_n<\dfrac{1+r_n}{r_n}$, for all $n=0,1,2,\ldots$, i.e., $x^*=1$ is a globally stable fixed point of the  $p-$periodic Beverton-Holt equation with harvesting.

\section{Final remarks}

In this paper, we have established the conditions  when individual enveloping implies periodic enveloping in certain  periodic population models. In other words, if a sequence of population models is enveloped by a common decreasing function $h$ such that $h(h(x))=x$, then the periodic equation is globally stable with respect to the positive equilibrium. 

In population dynamics, this observation stats that, if each one of the individual population is globally stable with respect to the positive equilibrium, then the population with fluctuation habitat is also globally stable with respect to the positive equilibrium.  A several examples are given in order to illustrate the results.

Hence, in the case of certain periodic forced systems, local stability implies global stability with respect to the positive fixed point.


\end{document}